\documentclass[12pt,letterpaper,bibliography=totoc]{scrartcl}

\usepackage{amsmath}
\usepackage{amsthm}
\usepackage{amssymb}
\usepackage{mathtools}
\usepackage{microtype}
\usepackage{xcolor}
\definecolor{Myblue}{rgb}{0,0,0.6}
\usepackage[colorlinks,citecolor=Myblue,linkcolor=Myblue,urlcolor=Myblue,pdfpagemode=UseNone]{hyperref}
\usepackage[totalwidth=420pt,totalheight=590pt]{geometry}
\usepackage[inline,shortlabels]{enumitem}

\allowdisplaybreaks

\setlist[itemize]{leftmargin=2em,itemsep=0.25em,topsep=0.35em}
\setlist[enumerate]{leftmargin=2.2em,itemsep=0.25em,topsep=0.35em}

\theoremstyle{definition}
\newtheorem{theorem}{Theorem}[section]
\newtheorem{proposition}[theorem]{Proposition}
\newtheorem{lemma}[theorem]{Lemma}
\newtheorem{corollary}[theorem]{Corollary}
\newtheorem{definition}[theorem]{Definition}
\newtheorem{remark}[theorem]{Remark}
\numberwithin{equation}{section}
\numberwithin{figure}{section}

\newcommand{\M}{\mathcal M}
\newcommand{\B}{\mathcal B}
\newcommand{\N}{\mathcal N}
\newcommand{\Prym}{\operatorname{Prym}}
\newcommand{\Jac}{\operatorname{Jac}}
\newcommand{\Vol}{\operatorname{Vol}}
\newcommand{\Hit}{\operatorname{Hit}}
\newcommand{\id}{\operatorname{id}}
\newcommand{\End}{\operatorname{End}}
\newcommand{\tr}{\operatorname{tr}}
\newcommand{\Tr}{\operatorname{Tr}}
\newcommand{\dd}{\mathrm d}
\newcommand{\ii}{\mathrm i}
\newcommand{\e}{\mathrm e}
\newcommand{\sfm}{\mathrm{sf}}
\newcommand{\orb}{\mathrm{orb}}
\newcommand{\cH}{C_H}
\newcommand{\cball}{C_{\mathrm{ball}}}
\newcommand{\Eul}{\operatorname{Eul}}
\newcommand{\Coeff}{\operatorname{Coeff}}
\newcommand{\Res}{\operatorname*{Res}}
\newcommand{\Vball}{\mathcal V}
\newcommand{\qrad}{Q}

\title{On the Asymptotics of the Volume of Hitchin Moduli Spaces}

\author{Shiyu Cao\\[-1mm]
  \normalsize Chern Institute of Mathematics, Nankai University\\
  \normalsize\href{mailto:shiyucao@126.com}{shiyucao@126.com}}
\date{September 2026}

\begin{document}

\maketitle

\begin{abstract}
Let $X$ be a compact Riemann surface of genus $g\geq2$, and let $\M$ be the
moduli space of rank-two trace-free Higgs bundles with fixed determinant of
odd degree. For the normalization of the Hitchin metric used in this paper,
we prove that the volume of geodesic ball in the Hitchin moduli space is given by
$$
\Vol_{g_{L^2}}B_{L^2}(p,R)
=\frac{2^{4g-3}\pi^{9g-9}}{(3g-3)!}\,R^{6g-6}+o(R^{6g-6})
$$
for every $p\in\M$. We also determine the leading asymptotics of Hamiltonian
sublevel volumes and exponentially weighted volumes. The proof combines
homogeneity of the hyperkähler volume form with metric asymptotics on the
regular Hitchin locus. Symplectic reduction and the Prym polarization
evaluate the coefficient, which is independently recovered by equivariant
localization.
\end{abstract}

\thispagestyle{empty}
\newpage
\setcounter{tocdepth}{2}
\tableofcontents

\section{Introduction and main results}

Hitchin introduced the self-duality equations on a compact Riemann surface
and showed that their moduli space admits a complex-geometric description in
terms of stable Higgs bundles \cite{Hitchin1987}.  In the fixed-determinant
rank-two case of odd degree, the resulting moduli space is smooth and carries
a complete hyperkähler metric.  In its Dolbeault complex structure, the
Hitchin map makes a dense open subset into an algebraically completely
integrable system whose fibers are Prym torsors associated with smooth
spectral curves; see \cite{BNR1989,Hausel1998,MSWW2019}.

The hyperkähler metric obtained from the gauge-theoretic quotient is the
Hitchin metric, or $L^2$ metric.  On the regular locus of the Hitchin
fibration, the special Kähler metric on the base and the Gauss--Manin
connection determine a second hyperkähler metric, the semiflat metric.  It is
flat on each torus fiber and is induced by the natural $L^2$ metric on
harmonic one-forms of the spectral curve \cite{Freed1999,MSWW2019}.  The
analytic construction of regular ends begins in \cite{MSWW2016}, and the
semiflat metric is the asymptotic model for the Hitchin metric along regular
scaling rays.  Exponential comparison in degree zero is proved by Mochizuki,
and the covering-and-twisting construction extends the comparison to
nonzero degree \cite{MochizukiMetric,MochizukiNonzeroDegree}.

This paper studies the volume growth of the complete metric $g_{L^2}$.  The
real dimension of the moduli space is $12g-12$, while the volume of a large
metric ball grows with exponent $6g-6$, the real dimension of the Hitchin
base.  The expanding base directions and the dual torus directions in the
semiflat metric produce this lower exponent.  We determine the leading
coefficient, identify it by symplectic reduction and the Prym polarization,
and recover it from the equivariant formulas of
Moore--Nekrasov--Shatashvili and Chiarello--Hausel--Szenes
\cite{MNS2000,CHS2020}.

Throughout, $X$ is a compact Riemann surface of genus $g\geq2$, $K_X$ is its
canonical bundle, and
$$
k:=3g-3.
$$

\begin{definition}\label{def:higgs-bundle}
A rank-two Higgs bundle on $X$ is a pair $(E,\Phi)$ consisting of a
holomorphic vector bundle $E\to X$ of rank two and a Higgs field
$$
\Phi\in H^0\bigl(X,\End(E)\otimes K_X\bigr).
$$
Fix a holomorphic line bundle $\Lambda$ of odd degree.  The pair has fixed
determinant $\Lambda$ and is trace-free if
$$
\det E\simeq\Lambda,
\qquad
\tr\Phi=0.
$$
It is stable if every nonzero proper holomorphic subbundle $F\subset E$ with
$\Phi(F)\subset F\otimes K_X$ satisfies $\mu(F)<\mu(E)$, where
$\mu(F)=\deg(F)/\operatorname{rk}(F)$.  We denote the corresponding moduli
space by
$$
\M=\M_{\Lambda}
:=
\left\{
(E,\Phi):\det E\simeq\Lambda,\ \tr\Phi=0,\ (E,\Phi)\text{ stable}
\right\}/\cong.
$$
Since $\gcd(2,\deg\Lambda)=1$, semistability and stability coincide.
\end{definition}

\begin{definition}
\label{def:L2-metric}
Fix a Hermitian metric on $\Lambda$.  By Hitchin's correspondence, every
stable pair $(E,\Phi)$ admits a unique Hermitian metric $h$ inducing the
chosen determinant metric and satisfying the trace-free Hitchin equation
$$
\bigl(F_{D_h}+[\Phi,\Phi^{\dagger_h}]\bigr)_0=0,
$$
where $D_h$ is the Chern connection and $\dagger_h$ denotes the Hermitian
adjoint \cite{Hitchin1987}.  The induced metric on $\End_0(E)$ is harmonic
for the adjoint Higgs bundle $(\End_0(E),\operatorname{ad}\Phi)$.  A tangent
class is represented by an $h$-harmonic $\End_0(E)$-valued one-form
$\tau=\tau^{1,0}+\tau^{0,1}$.  Following
\cite[Section~4.1.2]{MochizukiNonzeroDegree}, set
$$
\langle\tau_1,\tau_2\rangle_{L^2}
:=
2\ii\int_X\tr\left(
\tau_1^{1,0}\wedge(\tau_2^{1,0})^\dagger
-
\tau_1^{0,1}\wedge(\tau_2^{0,1})^\dagger
\right).
$$
The harmonic representative is unique, so this formula defines a Hermitian
pairing on $T\M$.  Its real part is the Hitchin metric $g_{L^2}$.  We write $\omega_I$ for its Kähler
form in the Dolbeault complex structure and
$\Omega_I=\omega_J+\ii\omega_K$ for the holomorphic symplectic form.  The
circle action and positive real scaling are
$$
\e^{\ii\vartheta}\cdot(E,\Phi)=(E,\e^{\ii\vartheta}\Phi),
\qquad
\delta_t(E,\Phi)=(E,t\Phi),
$$
and we set
$$
H(E,\Phi):=\frac12\|\Phi\|_{L^2}^2,
\qquad
\dd H=-\iota_K\omega_I,
$$
where $K$ is the period-$2\pi$ circle generator.  The generator of
$\delta_t$ with respect to $\log t$ is denoted by $\xi$.
\end{definition}

\begin{definition}
\label{def:semiflat-metric}
The rank-two Hitchin fibration is
$$
\Hit:\M\longrightarrow\B:=H^0(X,K_X^2),
\qquad
(E,\Phi)\longmapsto-\det\Phi.
$$
Let $\Delta\subset\B$ be the discriminant and set
$$
\B':=\B\setminus\Delta
=
\{q\in\B:q\text{ has only simple zeros}\},
\qquad
\M':=\Hit^{-1}(\B').
$$
For $q\in\B'$, write $S_q$ for the smooth spectral curve. The natural flat connection on
the Prym torsors gives a horizontal--vertical splitting of $T\M'$. Under
the spectral identification, its horizontal and vertical summands correspond
to anti-invariant harmonic forms of types $(1,0)$ and $(0,1)$, respectively.
The semiflat metric $g_{\sfm}$ is the real part of the pairing
$$
2\ii\int_{S_q}\left(
\sigma_1^{1,0}\wedge\overline{\sigma_2^{1,0}}
-\sigma_1^{0,1}\wedge\overline{\sigma_2^{0,1}}
\right).
$$
We use the connection and normalization of
\cite[Sections~3.1.2 and~4.3]{MochizukiNonzeroDegree}.
The induced metric on $\B'$ is denoted by $g_{\mathrm{sK}}$.
We write $\omega_{\sfm}$ for the $I$-Kähler form of $g_{\sfm}$.
\end{definition}

\begin{definition}
\label{def:volume-functions}
Since $\dim_{\mathbb C}\M=2k$, the Riemannian volume form is
$$
\dd V_{L^2}:=\frac{\omega_I^{2k}}{(2k)!}.
$$
Write $d_{L^2}$ for the distance induced by $g_{L^2}$.  For
$p_0\in\M$, $s\geq0$, $R>0$, and $\varepsilon>0$, define
$$
\begin{aligned}
A(s)&:=\Vol_{g_{L^2}}\{H\leq s\},\\
B_{L^2}(p_0,R)&:=\{x\in\M:d_{L^2}(p_0,x)<R\},\\
\Vball_{p_0}(R)&:=\Vol_{g_{L^2}}B_{L^2}(p_0,R),\\
V_H(\varepsilon)&:=\int_{\M}\e^{-\varepsilon H}\,\dd V_{L^2}.
\end{aligned}
$$
The constants appearing in the main asymptotics are
\begin{equation}\label{eq:explicit-leading-constants}
\cH:=\frac{2^{7g-6}\pi^{9g-9}}{(3g-3)!},
\qquad
\cball:=2^{-k}\cH
=
\frac{2^{4g-3}\pi^{9g-9}}{(3g-3)!}.
\end{equation}
\end{definition}

\begin{theorem}\label{thm:main}
With the notation of Definition~\ref{def:volume-functions},
\begin{align}
A(s)&=\cH s^k+O(s^{k-1})
&& (s\to\infty),\label{eq:sublevel-main}\\
\Vball_{p_0}(R)&=\cball R^{2k}+o(R^{2k})
&& (R\to\infty),\label{eq:ball-main}\\
V_H(\varepsilon)&=k!\cH\,\varepsilon^{-k}
+O(\varepsilon^{-k+1})
&& (\varepsilon\downarrow0).\label{eq:weighted-main}
\end{align}
In particular, the volume-growth exponent is $2k=6g-6$ and the leading
coefficient is independent of the center $p_0$.
\end{theorem}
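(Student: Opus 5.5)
We organize the proof around homogeneity under the scaling $\delta_t$, reducing everything to the sublevel function $A(s)$.

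\emph{Scaling.} Since $\delta_t^*\Phi=t\Phi$, the holomorphic symplectic form satisfies $\delta_t^*\Omega_I=t\,\Omega_I$, and $H\circ\delta_t=t^2H$ up to a correction coming from the $t$-dependence of the harmonic metric. Because $\dd V_{L^2}$ is a multiple of $(\Omega_I\wedge\overline{\Omega_I})^{k}$ (hyperkähler), we get $\delta_t^*\dd V_{L^2}=t^{2k}\,\dd V_{L^2}$ exactly. Hence
\[
\Vol\{x:\ \|\Hit(x)\|\le t\}=t^{2k}\Vol\{\|\Hit\|\le1\}
\]
for any homogeneous norm on $\B$ with $\|\Hit\circ\delta_t\|=t^2\|\Hit\|$. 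This quantity is finite since $\Hit$ is proper and $\dd V_{L^2}$ is smooth.

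\emph{Sublevel volumes $A(s)$.} We compare $H$ with the semiflat Hamiltonian $H_{\sfm}(E,\Phi)=\int_X|q|$ (up to normalization), which depends only on $q=\Hit(E,\Phi)$ and is homogeneous of degree $1$ in $q$. The exponential comparison of Mochizuki gives
\[
H=H_{\sfm}\circ\Hit+O(1)
\]
on $\M$. The error is $O(e^{-c|q|^{1/2}})$ on compact subsets of the regular locus, and it is uniformly bounded near $\Delta$ by the Kähler–moment-map structure. The bounded error on a neighborhood of $\Delta$ needs the most care, and I expect it to be the \textbf{main obstacle}. I would try a Hausel-type argument: $H$ is proper, and $H-H_{\sfm}\circ\Hit$ is bounded because both are limits of the $\mathbb C^*$-scaled functions on the compact core.

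Granting this, $A(s)=\Vol\{H_{\sfm}\circ\Hit\le s\}+O(s^{k-1})$: the shell $\{|H_{\sfm}\circ\Hit-s|\le C\}$ has volume $O(s^{k-1})$ by the scaling property (with $\|q\|=H_{\sfm}$), since $\Vol\{H_{\sfm}\circ\Hit\le s\}=c\,s^k$. The coefficient is $c=\int_{\{H_{\sfm}\le1\}}\Vol(\Hit^{-1}(q))$, computed on $\B'$ via symplectic reduction. In action-angle coordinates, $\dd V=(\text{Prym fiber volume})\cdot(\text{Lebesgue on }\B)$. The fiber volume is the Prym polarization volume, $2^{2g-2}$ times a $\pi$-power, and the base integral is the Euclidean volume of the unit ball of $\int|q|$ in $\B\cong\mathbb C^{k}$. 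This integral is evaluated using that $\{H\le s\}$ is the region below an $S^1$-moment map. Applying Duistermaat–Heckman, $A(s)$ is piecewise polynomial with top coefficient $\int_\M e^{-\varepsilon H}$ data, i.e.\ via~\eqref{eq:weighted-main}. We independently cross-check $\cH$ against the Moore–Nekrasov–Shatashvili and Chiarello–Hausel–Szenes equivariant volumes.

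\emph{Weighted volumes and balls.} \eqref{eq:weighted-main} follows from~\eqref{eq:sublevel-main} by $V_H(\varepsilon)=\varepsilon\int_0^\infty e^{-\varepsilon s}A(s)\,\dd s$, which gives $k!\cH\varepsilon^{-k}+O(\varepsilon^{-k+1})$. For balls, the triangle inequality makes $\Vball_{p_0}(R)$ independent of $p_0$ to leading order, since $B(p_0,R-d)\subset B(p_1,R)\subset B(p_0,R+d)$. Along regular rays, the semiflat distance from the core is $d\approx\sqrt{2H}$. Indeed, the radial vector $\xi$ satisfies $|\xi|^2=2H$ and is gradient-like, with $\nabla\sqrt{2H}$ of unit length asymptotically (the fibers have bounded diameter, $O(|q|^{-1/4})$). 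Then $B_{L^2}(p_0,R)$ agrees with $\{H\le R^2/2\}$ up to $o(R^{2k})$ volume. Establishing this needs the distance estimate to hold uniformly, including near $\Delta$; the $\Delta$-neighborhood has relatively small measure in the cone. The result is $\Vball(R)=\cH(R^2/2)^k+o(R^{2k})=2^{-k}\cH R^{2k}+o(R^{2k})$, matching $\cball$.
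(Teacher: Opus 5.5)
Your sublevel argument rests on a global two-sided estimate $H=H_{\sfm}\circ\Hit+O(1)$ on all of $\M$. You flag this as the main obstacle but do not prove it, and the suggested ``Hausel-type'' justification does not supply it. Properness of $H$ and compactness of $\N$ say nothing about the harmonic metrics along $\delta_t$ when zeros of $q$ collide or $q$ approaches $\Delta$. The exponential comparison you invoke (Proposition~\ref{prop:compact-comparison}) is uniform only on compact subsets of $\M'$. Even there, integrating $\dd H/\dd t$ yields only $H(\delta_t p)=2\qrad(p)t^2+O(1)$, not an exponentially small error.

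The missing idea is that no uniform two-sided bound is needed. Fix $P=H^{-1}(s_0)$ above all critical values.
\begin{enumerate}
\item Exact homogeneity $\delta_t^*\dd V_{L^2}=t^{2k}\dd V_{L^2}$ gives $A(s)-A(s_0)=\frac1{2k}\int_P\bigl(T_s(p)^{2k}-1\bigr)\,\dd\mu_P$.
\item The compact comparison gives the pointwise limit $T_s(p)^2/s\to1/(2\qrad(p))$ on $P'$.
\item The elementary inequality $H\ge2\int_X|\det\Phi|\,\dd A$ gives a global \emph{one-sided} domination $T_s(p)^2/s\le m_0^{-1}$ on all of $P$.
\end{enumerate}
Since $\mu_P(P\setminus P')=0$, dominated convergence yields $A(s)=\cH s^k+o(s^k)$. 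The $O(s^{k-1})$ remainder then comes from Duistermaat--Heckman, because $A$ is exactly a polynomial above $s_0$.

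Your ball argument has the same defect in milder form. The upper bound needs the global inequality $|\nabla H|^2=|K|^2\le2H$, where $K$ is the Coulomb projection of $(0,\ii\Phi)$. Your identity $|\xi|^2=2H$ is false: at a circle-fixed point with $\Phi\neq0$ one has $\xi=0$ but $H>0$. The lower bound should be proved for rays from a compact $\mathcal K\Subset P'$ and then passed through an exhaustion with $\cH(\mathcal K_j)\to\cH$, not from a distance estimate uniform near $\Delta$. Also, the semiflat fibers do not shrink like $|q|^{-1/4}$. Their metric is invariant under spectral dilation, which is why the exponent is $2k$.

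Second, the proposal never produces the constants $\cH$ and $\cball$ that the theorem asserts, and the ingredients it names are off.
\begin{enumerate}
\item \emph{Fiber factor.} The cover $S_q\to X$ has $4g-4$ branch points, so the polarization on $\Prym(S_q/X)$ has type $(1^{2g-3},2^g)$ and $\int\Theta^k/k!=2^g$, not $2^{2g-2}$. You also need the period computation $[\omega_I|_{\Prym_b}]=4\pi^2\Theta_b$ to get $\Vol(\Prym_b)=2^g(4\pi^2)^k$. The paper obtains it by transporting periods out along the scaling ray and using the semiflat vertical form.
\item \emph{Base factor.} A ``Euclidean volume of the unit ball of $\int|q|$'' is not canonically defined on $\B$. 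In a fiber--base splitting of the semiflat volume form, the base measure is the special K\"ahler volume. Evaluating it on $\{2\int_X|q|\,\dd A\le1\}$ is a genuine computation; it equals $\pi^k/k!$, since $4\int_X|q|\,\dd A$ is a K\"ahler potential for $g_{\mathrm{sK}}$. You would also have to justify replacing $\dd V_{L^2}$ by the semiflat volume form.
\item \emph{Circularity.} Extracting the coefficient ``via'' the weighted asymptotic is circular, since you derive that asymptotic from the sublevel one. An independent evaluation of $V_H$ would need the noncompact localization and the normalizations of Section~\ref{sec:localization}.
\end{enumerate}

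The paper instead uses Duistermaat--Heckman directly. From $[\omega_s]=[\omega_{s_0}]+2\pi(s-s_0)e$ and $2e=\bar\chi^*h$, one gets $e^k=0$ and $\cH=\frac{(2\pi)^k}{(k!)^2}\int_Z e^{k-1}\wedge\omega_{s_0}^k$. Fiber integration over $\bar\chi:Z\to\mathbb P^{k-1}$ and the Kummer factor $\frac12$ reduce this to $\frac{\pi^k}{k!}\Vol(\Prym_b)=2^{7g-6}\pi^{9g-9}/(3g-3)!$. Finally, $\cball=2^{-k}\cH$ follows from the ball comparison.
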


The scaling law for the holomorphic symplectic form gives an exact radial
decomposition of the hyperkähler volume form above a sufficiently large
regular value of $H$. The semiflat comparison determines the growth of $H$
along regular scaling rays. The remaining part of the level set has measure
zero, and a determinant inequality supplies a uniform bound for dominated
convergence. The Duistermaat--Heckman formula expresses the resulting
coefficient as an intersection number, which is evaluated using the Prym
polarization. Compact exhaustion of the regular level set gives the lower
bound for metric balls, while the Lipschitz estimate for $\sqrt{2H}$ gives
the upper bound. These steps require metric asymptotics uniform on compact
subsets of the regular locus. The final section recovers the coefficient by
equivariant localization and relates it to the formulas of
\cite{MNS2000,CHS2020}.

\subsection{Fixed-determinant normalization}

For the remainder of the paper, take the moduli space of
Definition~\ref{def:higgs-bundle} with $\deg\Lambda=1$ and write
$\M=\M_{\Lambda}$.  Then
$$
\dim_{\mathbb C}\M=2k=6g-6,
\qquad
\dim_{\mathbb R}\M=4k=12g-12.
$$
Every odd determinant degree reduces isometrically to this case.  Indeed, if
$\deg\Lambda$ is odd, choose a line bundle $N$ with
$$
\deg N=\frac{\deg\Lambda-1}{2}.
$$
Then
$$
(E,\Phi)\longmapsto(E\otimes N^{-1},\Phi)
$$
changes the determinant degree to one.  Tensoring identifies
$\End_0(E)$ with $\End_0(E\otimes N^{-1})$, hence also the fixed-determinant
trace-free deformation complexes and their $L^2$ pairings.  The normalization of the universal cohomology class is recorded in
\eqref{eq:hausel-normalization-dictionary}.

Fix an auxiliary conformal metric on $X$ and use it to define pointwise norms
and the area density $\dd A$. For a complex $(1,0)$-form $\eta$,
our tensor norms satisfy
$$
\ii\eta\wedge\overline\eta=|\eta|^2\dd A.
$$
The densities
$$
|q|\,\dd A,
\qquad
\frac{|\dot q|^2}{|q|}\,\dd A
$$
are conformally invariant.  Throughout, $\tr$ is the ordinary trace in the
defining rank-two representation and $\dagger$ denotes the adjoint with
respect to the harmonic metric.

\subsection{The regular Hitchin system and special Kähler geometry}

We now record the rank-two spectral conventions used below, following
\cite[Sections 2.2--2.4]{MSWW2019}.  A special Kähler metric is a Kähler
metric together with a flat, torsion-free, symplectic connection $\nabla$
satisfying $\dd^\nabla I=0$; see \cite{Freed1999}.  The restriction
$\Hit:\M'\to\B'$ is an algebraically completely integrable system with
compact complex Lagrangian fibers.  For $q\in\B'$, the spectral curve is
$$
S_q
=
\{\lambda\in K_X:\lambda^2=q\}
\xrightarrow{\ p_q\ }
X.
$$
Let $\sigma_q$ denote its deck involution.  The curve $S_q$ is a smooth
double cover of genus $4g-3$, and the Hitchin fiber over $q$ is a torsor for
$$
\Prym(S_q/X)
=
\ker\bigl(\operatorname{Nm}:\Jac(S_q)\to\Jac(X)\bigr)^0.
$$
After choosing a local horizontal section over a sufficiently small open
set $U\subset\B'$, one may identify the fibers of $\Hit^{-1}(U)$ with the
corresponding Prym varieties.  Thus
$$
\dim_{\mathbb C}\Prym(S_q/X)=3g-3=k.
$$

Let $\lambda$ be the Seiberg--Witten differential, obtained by restricting
the tautological one-form on $K_X$ to $S_q$. Then
$$
\sigma_q^*\lambda=-\lambda,
\qquad \lambda^2=q.
$$
The Gauss--Manin derivative of $\lambda$ in the direction $\dot q$ is
$$
\dot\lambda=\frac{\dot q}{2\lambda}.
$$
This is a holomorphic anti-invariant one-form on $S_q$, including at the
ramification points. Definition~\ref{def:semiflat-metric} therefore gives
\begin{equation}\label{eq:special-kahler-L2}
g_{\mathrm{sK}}(\dot q,\dot q)
=2\ii\int_{S_q}\dot\lambda\wedge\overline{\dot\lambda}
=\frac12\int_{S_q}\left|\frac{\dot q}{\lambda}\right|^2\dd A
=\int_X\frac{|\dot q|^2}{|q|}\,\dd A.
\end{equation}
The expression in \cite[Section~2.3]{MSWW2019} is multiplied by four here
to agree with the $L^2$ pairing in Definition~\ref{def:L2-metric}.

Higgs-field scaling induces $q\mapsto t^2q$. Spectral dilation
$(x,\lambda)\mapsto(x,t\lambda)$ identifies $S_q$ with $S_{t^2q}$ and
preserves the Gauss--Manin local system. In particular,
\begin{equation}\label{eq:special-coordinate-homogeneity}
g_{\mathrm{sK},t^2q}(t^2\dot q,t^2\dot q)
=t^2g_{\mathrm{sK},q}(\dot q,\dot q).
\end{equation}
The flat connection on the Prym torsors supplies the horizontal--vertical
splitting used in Definition~\ref{def:semiflat-metric}. A local horizontal
section identifies the torsors with their associated Prym varieties.
Spectral dilation transports both the spectral line bundle and the pullback
of $K_X$ as constant families, so the scaling orbit is horizontal in the
sense of \cite[Definition~3.3]{MochizukiNonzeroDegree}. The vertical metric
is unchanged by this transport, while the horizontal metric scales as in
\eqref{eq:special-coordinate-homogeneity}.

The real $2k$-dimensional horizontal space expands under scaling, while the
vertical metric is unchanged. This accounts for the semiflat volume-growth
exponent. The radial argument below gives the exact coefficient for the
Hitchin metric.

\subsection*{Acknowledgments}
The author would like to express his gratitude to Profs. Kefeng Liu and Huitao Feng for the guidance over the years. And he would like to thank Prof. Qiongling Li for her encouragement. The idea of this paper originated from the course at Nankai concerning analytic localization given by Prof. Feng  in 2022. After completing the main part of this paper, the author found the work of \cite{CHS2020} by ChatGPT, through which we obtained a parallel calculation , which gave rise to the same result. The author decided to keep it as an independent check.
\section{Radial geometry and metric asymptotics}\label{sec:radial-geometry}

\subsection{The circle action and a global Lipschitz estimate}

For unitary connection variations $a,b$, the convention of
Definition~\ref{def:L2-metric} gives
$$
\omega_I((a,0),(b,0))=-\int_X\tr(a\wedge b).
$$
On the Higgs-field summand,
$$
\langle\varphi,\psi\rangle_{L^2}
=2\ii\int_X\tr(\varphi\wedge\psi^\dagger).
$$

\begin{lemma}\label{lem:moment-map-normalization}
The infinitesimal generator of the circle action on configuration space is
$$
\widetilde K=(0,\ii\Phi),
\qquad
\|\widetilde K\|_{L^2}^2=2H.
$$
The induced vector field on the moduli space satisfies
\begin{equation}\label{eq:gradient-generator}
\xi=-IK=\nabla H.
\end{equation}
\end{lemma}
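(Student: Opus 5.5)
The plan is a direct computation on the configuration space, followed by descent to $\M$ through harmonic representatives and the hyperkähler relation between $\omega_I$ and $g_{L^2}$.

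\emph{The generator.} The circle action $\e^{\ii\vartheta}\cdot(A,\Phi)=(A,\e^{\ii\vartheta}\Phi)$ fixes the connection and multiplies the Higgs field. Differentiating at $\vartheta=0$ gives
$$
\widetilde K=(0,\ii\Phi).
$$
Using the Higgs-field pairing recorded above,
$$
\|\widetilde K\|^2_{L^2}
=2\ii\int_X\tr\bigl(\ii\Phi\wedge(\ii\Phi)^\dagger\bigr)
=2\ii\int_X\tr(\Phi\wedge\Phi^\dagger).
$$
With the convention $\ii\eta\wedge\overline\eta=|\eta|^2\dd A$, this is $2\int_X|\Phi|^2\dd A=2\|\Phi\|^2_{L^2}$. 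Here I have to check which of $\|\Phi\|^2$ or $2\|\Phi\|^2$ the paper intends by $\|\Phi\|^2_{L^2}$. The definition $H=\frac12\|\Phi\|_{L^2}^2$ is presumably normalized with respect to this same $L^2$ pairing, so that $\|\Phi\|^2_{L^2}:=\langle\Phi,\Phi\rangle_{L^2}$. Then $\|\widetilde K\|^2=\langle\ii\Phi,\ii\Phi\rangle_{L^2}=\langle\Phi,\Phi\rangle_{L^2}=2H$, since the pairing is Hermitian and $|\ii|=1$. I will adopt this reading.

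\emph{Harmonic representative.} The tangent vector $\widetilde K$ must be put in Coulomb gauge. The harmonicity conditions are
$$
\bar\partial_E(\ii\Phi)=0,\qquad \bar\partial_E^*a+[\Phi^\dagger,\ii\Phi]\text{-type term}=0.
$$
The first holds because $\Phi$ is holomorphic. The second is the linearized moment-map (gauge-fixing) condition; its Higgs contribution is $\ii[\Phi^\dagger,\Phi]$ up to sign. I expect this term not to vanish identically, so the harmonic representative is $\widetilde K+(\dd_A u,[\Phi,u])$ for some $u\in\Omega^0(\mathfrak{su}(E))$. The norm then satisfies
$$
\|K\|^2\le\|\widetilde K\|^2.
$$
The claimed equality $\|\widetilde K\|^2=2H$ is stated on configuration space, so it is unaffected. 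Equality $\|K\|^2=2H$ on $\M$ is not needed for the identity $\xi=\nabla H$.

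\emph{Moment map.} I then verify $\dd H=-\iota_K\omega_I$. On configuration space $\omega_I$ is flat, and $H$ is gauge invariant. For a variation $(a,\psi)$ we have $\dd H(a,\psi)=\operatorname{Re}\langle\Phi,\psi\rangle_{L^2}$. Compare this with $\omega_I(\widetilde K,(a,\psi))=g(I\widetilde K,(a,\psi))$. On the Higgs summand $I$ is multiplication by $\ii$ (or $-\ii$ under the stated conventions), so
$$
g(I\widetilde K,\cdot)=\operatorname{Re}\langle\mp\Phi,\psi\rangle,
$$
which fixes the sign of $\dd H=-\iota_K\omega_I$. Since $H$ is gauge invariant, the gauge correction $u$ pairs to zero with $\dd H$. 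The identity therefore descends to $\M$.

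\emph{Gradient and scaling.} For a Kähler form, $\iota_K\omega_I=g(IK,\cdot)$. Hence $\dd H=-g(IK,\cdot)$, that is, $\nabla H=-IK$. Finally, $-IK$ acts on the Higgs field by $-\ii\cdot\ii\Phi=\Phi$ (again mod gauge). This is the derivative of $t\Phi$ at $t=1$ with respect to $\log t$, namely $\xi$.

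The main obstacle is bookkeeping of signs and factors of $2$. This includes the sign convention for $I$ on the Higgs summand, which should be pinned down from the stated formula $\omega_I((a,0),(b,0))=-\int\tr(a\wedge b)$ together with $g=\operatorname{Re}\langle\cdot,\cdot\rangle$. It also includes consistency of $\|\Phi\|^2_{L^2}$ with the factor $2\ii$.
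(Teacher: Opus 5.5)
Your proposal is correct and follows the paper's route: compute $\iota_{\widetilde K}\omega_I=g_{L^2}(I\widetilde K,\cdot)=-\operatorname{Re}\langle\Phi,\cdot\rangle_{L^2}=-\dd H$ on configuration space, descend to $\M$ through the Coulomb projection, and identify $-IK$ with the $\log t$ generator of $\delta_t$. Two small corrections. First, the sign you leave open must be $I=+\ii$ on the Higgs summand; the paper writes $I\widetilde K=(0,-\Phi)$, and your final step already uses this. Second, the descent rests on gauge directions being $\omega_I$-orthogonal to vectors tangent to the solution space (equivalently, the Coulomb projection preserves pairings with the $I$-invariant horizontal space), not merely on $\dd H$ annihilating gauge directions.
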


\begin{proof}
For a tangent representative $(a,\varphi)$,
$$
\dd H(a,\varphi)
=\operatorname{Re}\langle\Phi,\varphi\rangle_{L^2}.
$$
On the Higgs-field summand, $I\widetilde K=(0,-\Phi)$.  Hence
$$
\omega_I(\widetilde K,(a,\varphi))
=g_{L^2}(I\widetilde K,(a,\varphi))
=-\operatorname{Re}\langle\Phi,\varphi\rangle_{L^2}
=-\dd H(a,\varphi).
$$
Orthogonal projection of $\widetilde K$ to the Coulomb slice preserves its
pairing with horizontal tangent vectors, so the identity descends to $\M$.
The positive real subgroup of the complexified circle action is generated by
$-IK$, which is the $\log t$ generator of $\delta_t$.
\end{proof}

Since $K$ is the orthogonal projection of $\widetilde K$ onto the Coulomb
slice, equation~\eqref{eq:gradient-generator} gives
$$
|\nabla H|^2=|K|^2\leq2H.
$$
Consequently, $\sqrt{2H}$ is globally $1$-Lipschitz.  On
$\{H>0\}$,
$$
|\dd\sqrt{2H}|
=\frac{|\dd H|}{\sqrt{2H}}
\leq1.
$$
Applying the same estimate to $\sqrt{2H+\varepsilon}$ near $H^{-1}(0)$ and
letting $\varepsilon\downarrow0$ yields
\begin{equation}\label{eq:lipschitz-global}
\left|\sqrt{2H(x)}-\sqrt{2H(y)}\right|
\leq d_{L^2}(x,y).
\end{equation}

\subsection{Homogeneity of the holomorphic symplectic form}

The holomorphic symplectic form is induced by the Serre pairing between
bundle and Higgs-field variations. Scaling fixes the former and multiplies
the latter by $t$, so
$$
\delta_t^*\Omega_I=t\Omega_I.
$$
With $\Omega_I=\omega_J+\ii\omega_K$ and $\dim_{\mathbb C}\M=2k$, the
hyperkähler volume form is
$$
\dd V_{L^2}
=\frac{\Omega_I^k\wedge\overline{\Omega_I}^{\,k}}{4^k(k!)^2}.
$$
It follows that
\begin{equation}\label{eq:volume-weight}
\delta_t^*\dd V_{L^2}=t^{2k}\dd V_{L^2}.
\end{equation}

\subsection{Radial parametrization above the largest critical value}

\begin{definition}
\label{def:high-level-set}
The nilpotent cone is
$$
\N:=\Hit^{-1}(0).
$$
It is compact by properness of the Hitchin map.  Fix a regular value $s_0$
above every critical value of $H$ and satisfying
$$
s_0>\max_{\N}H.
$$
Set
$$
P:=H^{-1}(s_0),
\qquad
P':=P\cap\M'.
$$
Then $P$ is a compact smooth hypersurface and $P\cap\N=\varnothing$.
\end{definition}

\begin{lemma}\label{lem:radial-parametrization}
With $s_0$ and $P$ as in Definition~\ref{def:high-level-set}, every orbit
of the positive real subgroup that meets $\{H\geq s_0\}$ intersects $P$
exactly once, and
$$
F:P\times[1,\infty)\longrightarrow\{H\geq s_0\},
\qquad
F(p,t)=\delta_t p
$$
is a diffeomorphism.
\end{lemma}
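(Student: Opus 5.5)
The plan is to show that $H$ is strictly increasing along every real scaling orbit that lies in $\{H\geq s_0\}$. The key identity is Lemma~\ref{lem:moment-map-normalization}: the $\log t$ generator of $\delta_t$ is $\xi=\nabla H$. Hence along an orbit $t\mapsto\delta_t x$,
$$
\frac{\dd}{\dd\log t}H(\delta_t x)=|\nabla H|^2(\delta_t x)=|K|^2(\delta_t x)\geq0 .
$$
This derivative vanishes exactly at zeros of $K$, that is, at fixed points of the circle action. These are critical points of $H$, so they have critical values below $s_0$. Consequently, on $\{H\geq s_0\}$ the function $t\mapsto H(\delta_t x)$ is strictly increasing, and every level $H=c\geq s_0$ is regular. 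Moreover, at every point of $P$ the generator $\xi$ is transverse to $P$, since $\dd H(\xi)=|\nabla H|^2>0$.

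Next I control the two ends of each orbit. Since $\Phi\mapsto t\Phi$ gives $\Hit(\delta_t x)=t^2\Hit(x)$, the limit $\lim_{t\to0}\delta_t x$ exists for every $x$ (properness of $\Hit$ together with the standard Simpson limit argument). This limit is a circle-fixed point in $\N$, where $H<s_0$. In the other direction, if $x\notin\N$ then $\Hit(\delta_tx)\to\infty$, so $\delta_t x$ leaves every compact set as $t\to\infty$. Since the sublevel sets $\{H\leq c\}$ are compact (properness of $H$, which follows from properness of $\Hit$ and compactness of the level sets used in Definition~\ref{def:high-level-set}), we get $H(\delta_tx)\to\infty$.

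Now take an orbit that meets $\{H\geq s_0\}$. It does not lie in $\N$, because $H<s_0$ on $\N$. Along the orbit, $H$ tends to a value below $s_0$ as $t\to0$ and to $\infty$ as $t\to\infty$, so by continuity it crosses $s_0$. The crossing is unique by the monotonicity above: once $H\geq s_0$, the derivative stays positive, so $H$ never returns to $s_0$. This gives the statement that each such orbit meets $P$ exactly once.

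It follows that $F$ is a continuous bijection onto $\{H\geq s_0\}$. Each image point $y$ has the form $\delta_t p$ with $p\in P$ and $t\geq1$, because $H(\delta_\tau y)$ increases through the unique crossing. To see that $F$ is a diffeomorphism, I compute its differential at $(p,t)$. It is $\dd\delta_t$ on $T_pP$ and sends $\partial_t$ to $t^{-1}\xi(\delta_tp)$. Since $\delta_t$ is a diffeomorphism, the first part is injective on $T_pP$. The transversality of $\xi$ to $P$ is preserved by the flow, since $\delta_t$ maps $P$ to the level set through $\delta_tp$, which $\xi$ crosses. Hence $\dd F$ is an isomorphism, including at the boundary $t=1$. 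A smooth bijection that is a local diffeomorphism is a diffeomorphism, with the boundary $P\times\{1\}$ mapping onto $H^{-1}(s_0)$.

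The main obstacle is justifying the ends of the orbits: that $\lim_{t\to0}\delta_t x$ exists and lies in $\N$, and that $H\to\infty$ as $t\to\infty$. I would first try to derive these from properness of $\Hit$ together with the monotonicity of $H$ along $\xi$, since $H$ is bounded on compacta and $\Hit(\delta_tx)=t^2\Hit(x)$. If that is not enough, I would cite Hitchin's and Simpson's results on the existence of $\mathbb C^*$-limits.
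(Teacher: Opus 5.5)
Your argument is essentially the paper's: monotonicity of $H$ along $\xi=\nabla H$, the backward limit in $\N$ from properness of $\Hit$ (the paper uses the valuative criterion), escape to infinity from properness of $H$, and the intermediate value theorem. For smoothness of the inverse you show $\dd F$ is invertible, whereas the paper applies the implicit function theorem to $(p,t)\mapsto H(\delta_tp)$; the two amount to the same thing.

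One justification is false and needs replacing. The map $\delta_t$ does not send $P$ into a level set of $H$, because the growth of $H$ along an orbit depends on the starting point $p$. The correct reason is that $\xi$ is invariant under its own flow, $\dd\delta_t(\xi_p)=\xi_{\delta_tp}$. Hence $\dd F_{(p,t)}(v,c\,\partial_t)=\dd\delta_t\bigl(v+ct^{-1}\xi_p\bigr)$, which is an isomorphism since $T_pP\oplus\mathbb R\,\xi_p=T_p\M$.

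Also, do not justify properness of $H$ by compactness of $P$, since that compactness already presupposes it. Cite properness of $H$ from Hitchin, or derive it from properness of $\Hit$ via $H\geq2\int_X|\det\Phi|\,\dd A$.
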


\begin{proof}
For $x\in\M$, consider the algebraic orbit map
$$
f_x:\mathbb C^*\longrightarrow\M,
\qquad
t\longmapsto\delta_t x.
$$
Its composition with the Hitchin map is
$$
\Hit\circ f_x(t)=t^2\Hit(x),
$$
which extends holomorphically across $t=0$.  Since
$\Hit:\M\to\B$ is proper, the valuative criterion gives a unique extension
of $f_x$ across $t=0$.  Its value at $0$ lies in
$\N=\Hit^{-1}(0)$.  Along an orbit of the positive real subgroup,
$$
\frac{\dd}{\dd\log t}H(\delta_t x)
=
\dd H(\xi)
=
|\xi|_{g_{L^2}}^2.
$$
There are no critical points on $\{H\geq s_0\}$, so this function is
strictly increasing there.

If $p\in P$, then $\Hit(p)\neq0$ and
$$
\Hit(\delta_t p)=t^2\Hit(p).
$$
Thus $\delta_t p$ leaves every compact subset as $t\to\infty$. Properness of
$H$ gives
$$
H(\delta_t p)\longrightarrow\infty.
$$
The backward limit lies in $\N$, where $H<s_0$. Strict monotonicity and the
intermediate value theorem give a unique intersection. Applying the implicit
function theorem to
$$
(p,t)\longmapsto H(\delta_t p)
$$
and using $\partial_{\log t}H=|\xi|^2>0$ shows that the intersection
depends smoothly on the orbit.  The resulting inverse to $F$ is smooth, so
$F$ is a diffeomorphism.
\end{proof}

\begin{definition}\label{def:radial-measure}
Orient $P$ by the outward normal to $\{H\leq s_0\}$ and set
$$
\dd\mu_P
:=
\left.\iota_\xi\dd V_{L^2}\right|_P.
$$
This is a positive smooth density on $P$; its associated measure is denoted
by $\mu_P$.
\end{definition}

Since $F_*\partial_t=\xi/t$, equation~\eqref{eq:volume-weight} gives
\begin{equation}\label{eq:radial-volume}
F^*\dd V_{L^2}
=
t^{2k-1}\dd t\wedge\dd\mu_P.
\end{equation}

\subsection{Full measure of the regular locus and a uniform quadratic lower bound}

The complement $\M\setminus\M'=\Hit^{-1}(\Delta)$ is a proper complex
analytic subset of $\M$, hence has real codimension at least two.  Its
intersection with the smooth real hypersurface $P$ has zero measure with
respect to the smooth density $\dd\mu_P$.  Therefore
\begin{equation}\label{eq:singular-measure-zero}
\mu_P(P\setminus P')=0.
\end{equation}

The pointwise determinant inequality gives
\begin{equation}\label{eq:determinant-bound}
H(E,\Phi)\geq2\int_X|\det\Phi|\,\dd A.
\end{equation}
Indeed, write $\Phi=\varphi\zeta$ in unitary frames, with $|\zeta|=1$,
and let $s_1,s_2$ be the singular values of $\varphi$. Then
$$
2|\det\varphi|=2s_1s_2\leq s_1^2+s_2^2
=\tr(\varphi\varphi^\dagger).
$$
Since $H=\int_X|\Phi|^2\,\dd A$ with our tensor norms, integration proves
\eqref{eq:determinant-bound}.

For $p=[E,\Phi_p]\in P$, define
$$
q_p:=-\det\Phi_p,
\qquad
\qrad(p):=\int_X|q_p|\,\dd A.
$$
The function $\qrad$ is continuous and positive on $P$, because
$P\cap\N=\varnothing$. Set
$$
m_0:=2\min_{p\in P}\qrad(p)>0.
$$
Since $\det(t\Phi_p)=t^2\det\Phi_p$, \eqref{eq:determinant-bound} yields
\begin{equation}\label{eq:uniform-quadratic-lower}
H(\delta_t p)\geq m_0t^2
\qquad (p\in P,\ t\geq1).
\end{equation}

\subsection{The semiflat norm of the scaling vector field}

On the regular locus, the semiflat metric provides the model for the radial norm.

\begin{lemma}\label{lem:semiflat-scaling}
For $p\in P'$ and $t\geq1$,
\begin{equation}\label{eq:semiflat-radial-norm}
g_{\sfm}(\xi,\xi)_{\delta_t p}
=
4t^2\qrad(p).
\end{equation}
\end{lemma}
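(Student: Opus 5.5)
The plan is to show that, in the semiflat metric, the scaling vector field $\xi$ is purely horizontal. Its norm is then the special Kähler norm of the induced base vector, and equation~\eqref{eq:special-kahler-L2} evaluates that norm directly. The argument runs in three steps.

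\emph{Step 1: $\xi$ is horizontal.} By the discussion following \eqref{eq:special-coordinate-homogeneity}, spectral dilation transports the spectral line bundle and the pullback of $K_X$ as constant families. So the scaling orbit $t\mapsto\delta_t p$ is horizontal for the flat connection on the Prym torsors, in the sense of \cite[Definition~3.3]{MochizukiNonzeroDegree}. Its tangent vector $\xi$, being the $\log t$ generator, therefore lies in the horizontal summand of $T\M'$. Since the semiflat metric makes the horizontal and vertical summands orthogonal, with the horizontal part isometric via $\dd\Hit$ to $g_{\mathrm{sK}}$, we get
$$
g_{\sfm}(\xi,\xi)_{\delta_t p}=g_{\mathrm{sK}}\bigl(\dd\Hit(\xi),\dd\Hit(\xi)\bigr)_{\Hit(\delta_t p)}.
$$

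\emph{Step 2: identify the base vector.} We have $\Hit(\delta_t p)=t^2q_p$, and $\delta_t p\in\M'$ because $t^2q_p$ still has only simple zeros. Differentiating in $\log t$ gives
$$
\dd\Hit(\xi)=\frac{\dd}{\dd\log t}\,t^2q_p=2t^2q_p.
$$

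\emph{Step 3: evaluate.} By \eqref{eq:special-kahler-L2}, with $q=t^2q_p$ and $\dot q=2t^2q_p$,
$$
g_{\mathrm{sK}}(\dot q,\dot q)=\int_X\frac{4t^4|q_p|^2}{t^2|q_p|}\,\dd A=4t^2\int_X|q_p|\,\dd A=4t^2\qrad(p).
$$
Equivalently, one can compute at $t=1$ and use the homogeneity \eqref{eq:special-coordinate-homogeneity}. As a cross-check on the spectral curve, $\dot\lambda=\dot q/(2\lambda)=t^2q_p/(t\lambda_p)$ is a multiple of the Seiberg--Witten differential, consistent with $\xi$ being the Euler field.

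\emph{Main obstacle.} The delicate point is Step 1: the claim that $\xi$ has no vertical component. This depends on the normalization of the horizontal section in \cite{MochizukiNonzeroDegree}; in the nonzero-degree twisting, the fixed determinant $\Lambda$ could a priori introduce a vertical drift. I would check it by writing $\delta_t(E,\Phi)$ spectrally as $(S_{t^2q},L_t)$, where $L_t$ is the pullback of $L$ under the dilation $S_{t^2q}\to S_q$. I would then verify that this family is flat for the Gauss--Manin connection on $H^1(S_q)^-$, so that the lattice-periodic vertical coordinates are constant along the orbit. This is exactly what the cited \cite[Definition~3.3]{MochizukiNonzeroDegree} asserts, and I would invoke it.
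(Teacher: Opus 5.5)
Your proposal is correct and follows essentially the same route as the paper. The paper also argues horizontality of the scaling orbit via spectral dilation $S_{q_p}\to S_{t^2q_p}$ and BNR transport of the spectral line bundle, which is exactly the check you propose for your ``main obstacle''. It then identifies the base component of $\xi$ at $t^2q_p$ as $2t^2q_p$ and evaluates with \eqref{eq:special-kahler-L2} to get $4t^2\qrad(p)$.
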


\begin{proof}
Fiberwise dilation
in $K_X=T^*X$,
$$
S_{q_p}\longrightarrow S_{t^2q_p},
\qquad
(x,\lambda)\longmapsto(x,t\lambda),
$$
commutes with the spectral involutions.  It induces Gauss--Manin flat
transport on anti-invariant integral homology and identifies the
corresponding Prym torsors.  Under the BNR correspondence \cite{BNR1989},
the spectral line bundle representing $\delta_t p$ is the transport of the
line bundle representing $p$.  Thus the scaling orbit is horizontal for the
semiflat connection.

At $\delta_t p$, the base point is $t^2q_p$ and the base component of $\xi$ is
$2t^2q_p$. Equation \eqref{eq:special-kahler-L2} gives
$$
\begin{aligned}
g_{\sfm}(\xi,\xi)_{\delta_t p}
&=
g_{\mathrm{sK}}(2t^2q_p,2t^2q_p)_{t^2q_p}
\\
&=
\int_X
\frac{|2t^2q_p|^2}{|t^2q_p|}\,\dd A
\\
&=
4t^2\int_X|q_p|\,\dd A.
\end{aligned}
$$
\end{proof}

\subsection{Reduction of the metric comparison to degree zero}

We transfer the calculations to the Hitchin metric using the following comparison.

\begin{proposition}\label{prop:compact-comparison}
For every compact subset $\mathcal K\Subset\M'$, there are constants $C,c>0$
such that
\begin{equation}\label{eq:compact-comparison}
\left|
g_{\sfm}^{-1}g_{L^2}-\id
\right|_{g_{\sfm},\,\delta_t p}
\leq C\e^{-ct}
\end{equation}
for all $p\in\mathcal K$ and $t\geq1$.  Here
$g_{\sfm}^{-1}g_{L^2}$ is regarded as a positive
$g_{\sfm}$-self-adjoint endomorphism, and the norm is its operator norm with
respect to $g_{\sfm}$.
\end{proposition}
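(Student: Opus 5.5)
The plan is to deduce \eqref{eq:compact-comparison} from the known exponential comparison theorems rather than to redo the analysis. The section title suggests the route: reduce the fixed odd degree case to degree zero, where Mochizuki's estimate \cite{MochizukiMetric} holds, via the covering-and-twisting construction of \cite{MochizukiNonzeroDegree}. We may take $\deg\Lambda=1$ by the reduction in the fixed-determinant normalization. The essential point is uniformity: we need $C,c$ independent of $p\in\mathcal K$ and $t\ge1$.

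\textbf{Step 1: reduction to degree zero.} Following \cite[Section~4]{MochizukiNonzeroDegree}, choose an unramified cyclic cover $\pi:\widetilde X\to X$ of degree two. Choose a line bundle $L$ on $\widetilde X$ so that $\pi^*E\otimes L^{-1}$ has degree zero. The map $(E,\Phi)\mapsto(\pi^*E\otimes L^{-1},\pi^*\Phi)$ sends $\M'$ into the degree-zero $SL_2$-type moduli space on $\widetilde X$. The images have regular spectral data, because $\pi^*q$ has only simple zeros when $q$ does. Harmonic metrics pull back to harmonic metrics, and $\End_0$ is unchanged by the twist, so harmonic representatives of tangent vectors pull back to harmonic representatives. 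Hence $g_{L^2}$ pulls back to $\deg(\pi)\,g_{L^2}$ on the image, and the same holds for $g_{\sfm}$. For the latter, spectral curves pull back to unramified covers $S_{\pi^*q}\to S_q$, anti-invariant harmonic forms pull back, and the Prym connections are compatible. This is exactly \cite[Sections~3.1.2 and~4.3]{MochizukiNonzeroDegree}. The endomorphism $g_{\sfm}^{-1}g_{L^2}$ is therefore intertwined by an injective immersion that is isometric up to the common factor. Consequently its $g_{\sfm}$-operator norm deviation from $\id$ at $\delta_tp$ is bounded by the corresponding quantity upstairs at $\delta_t\widetilde p$. We use that the pullback commutes with $\delta_t$.

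\textbf{Step 2: the degree-zero estimate with uniform constants.} In degree zero, Mochizuki proves that $g_{L^2}-g_{\sfm}$ is $O(\e^{-c t})$ along $\delta_t$ for points in the regular locus. The step requiring care is uniformity over a compact set $\widetilde{\mathcal K}$, the image of $\mathcal K$. I would extract it from the structure of the proof. The exponential decay comes from comparing the harmonic metric $h_t$ with the limiting configuration (decoupled) metric. The rate $c$ is governed by the lower bound of $t|\sqrt{q}|$ away from the zeros of $q$, together with uniform local models near the simple zeros. Both depend continuously on $q$ in $\B'$. The constants also depend on the spectral line bundle only through continuous data on a compact Prym fibre, and $\Hit(\mathcal K)$ is compact in $\B'$, so all constants can be chosen uniformly. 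Concretely, I would cover $\Hit(\mathcal K)$ by finitely many small balls $U_i\Subset\B'$ where the zero divisors move in a uniform family, apply the estimate with constants on each, and take the worst $C$ and smallest $c$.

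\textbf{Step 3: from tensor estimate to operator norm.} Mochizuki's statement gives $|g_{L^2}-g_{\sfm}|_{g_{\sfm}}\le C\e^{-ct}$ as symmetric bilinear forms. Converting this to the operator norm of $g_{\sfm}^{-1}g_{L^2}-\id$ is immediate, since the two norms coincide for $g_{\sfm}$-self-adjoint endomorphisms. A subtlety is that $g_{\sfm}$ is measured at $\delta_tp$, where horizontal directions have been rescaled by $t$. The estimate must be used in exactly this form, the norm taken in $g_{\sfm}$ at the point, which is the form Mochizuki proves. The main obstacle is Step 2. The cited theorems are often phrased pointwise along each ray, or for $|q|$ large with a compact set of directions, so one must verify that the constants depend only on a compact set of spectral data. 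If needed I would rephrase: $\{\delta_tp: p\in\mathcal K,\ t\ge1\}$ lies in the cone over the compact set $\Hit(\mathcal K)/\mathbb R_{>0}$ of normalized differentials with simple zeros. This is precisely the setting in which \cite{MochizukiMetric,MochizukiNonzeroDegree} state uniform exponential decay.
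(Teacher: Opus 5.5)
Your proposal is correct and follows essentially the same route as the paper. You pull back along a connected unramified cyclic cover of even degree (your degree-two choice with $\deg L=1$ is a special case), twist to degree zero, and use Mochizuki's covering and twisting identities so that the common factor $\deg\pi$ cancels in $g_{\sfm}^{-1}g_{L^2}$. You then apply the degree-zero theorem on the compact image of $\mathcal K$, which the paper takes to be already uniform on compact subsets of the regular locus.

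The one point the paper makes explicit and you leave implicit is that the base-changed spectral curve is smooth and connected. This makes the pulled-back Higgs bundle stable, so the image really lies in the smooth regular degree-zero locus where that theorem applies.
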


\begin{proof}
Choose a connected unramified cyclic Galois cover of even degree $d_\psi$,
$$
\psi:Y\longrightarrow X
$$
and a line bundle $L\to Y$ with $\deg L=d_\psi/2$. On $\M'$, define
$$
F_{\psi,L}(E,\Phi)
=
\bigl(\psi^*E\otimes L^{-1},\psi^*\Phi\bigr).
$$
Since $\deg E=1$ and $\deg\psi=d_\psi$, one has
$$
\deg(\psi^*E\otimes L^{-1})=d_\psi-2\deg L=0.
$$
The map commutes with scaling. Mochizuki's covering and twisting formulas give
$$
F_{\psi,L}^*g_{L^2,Y}
=
d_\psi g_{L^2,X},
\qquad
F_{\psi,L}^*g_{\sfm,Y}
=
d_\psi g_{\sfm,X};
$$
see
\cite[Lemmas 4.2 and 4.12 and Propositions 4.9 and 4.13]
{MochizukiNonzeroDegree}.

Because $\psi$ is unramified, the spectral curve of $\psi^*\Phi$ is the
smooth base change of the spectral curve of $\Phi$.  The simple zeros of
$q=-\det\Phi$ pull back to nonempty simple branching over the connected
curve $Y$, so this double cover is connected.  By the BNR correspondence, connectedness of the smooth spectral
curve excludes a proper Higgs-invariant line subbundle.  Hence the
pulled-back Higgs bundle is stable and belongs to the stable smooth locus of
the degree-zero regular moduli space; twisting by $L^{-1}$ preserves these
properties.  Consequently,
$F_{\psi,L}(\mathcal K)$ is a compact subset of that locus. Apply
\cite[Theorem 4.7]{MochizukiMetric} to this compact subset and pull the estimate
back along $F_{\psi,L}$. The common factor $d_\psi$ in the two metrics cancels,
which proves \eqref{eq:compact-comparison}.

Moreover,
$$
\det(\psi^*E\otimes L^{-1})
=
\psi^*\Lambda\otimes L^{-2},
\qquad
\tr(\psi^*\Phi)=0.
$$
Thus $F_{\psi,L}$ maps the present moduli space to the degree-zero
fixed-determinant locus. Its differential preserves the trace-free tangent
spaces, and the degree-zero metric estimate restricts to these spaces.
\end{proof}

\subsection{Asymptotics of \texorpdfstring{$H$}{H} and scaling-orbit length}

Fix a compact subset $\mathcal K\Subset P'$. Equations
\eqref{eq:semiflat-radial-norm} and \eqref{eq:compact-comparison} give,
uniformly for $p\in\mathcal K$,
$$
|\xi|_{g_{L^2}}^2
=
4t^2\qrad(p)\bigl(1+O_{\mathcal K}(\e^{-ct})\bigr).
$$
Since $\xi=\nabla H$,
$$
\frac{\dd}{\dd t}H(\delta_t p)
=
\frac1t|\xi|_{g_{L^2}}^2
=
4t\qrad(p)\bigl(1+O_{\mathcal K}(\e^{-ct})\bigr).
$$
Integrating from $1$ to $t$ and using
$\int_1^\infty u\e^{-cu}\,\dd u<\infty$ gives
\begin{equation}\label{eq:H-ray-asymptotic}
H(\delta_t p)
=
2\qrad(p)t^2+O_{\mathcal K}(1).
\end{equation}

The length of the same scaling path is
$$
\begin{aligned}
L(p,t)
&=
\int_1^t
\left|\frac{\dd}{\dd u}\delta_u p\right|_{g_{L^2}}\dd u
\\
&=
\int_1^t\frac{|\xi|_{g_{L^2}}}{u}\,\dd u
\\
&=
2\int_1^t\sqrt{\qrad(p)}
\bigl(1+O_{\mathcal K}(\e^{-cu})\bigr)\,\dd u
\\
&=
2t\sqrt{\qrad(p)}+O_{\mathcal K}(1).
\end{aligned}
$$
Since $\qrad$ has a positive lower bound on $\mathcal K$,
\eqref{eq:H-ray-asymptotic} gives
$$
\sqrt{2H(\delta_t p)}
=
2t\sqrt{\qrad(p)}+O_{\mathcal K}(t^{-1}).
$$
Therefore
\begin{equation}\label{eq:length-ray-asymptotic}
L(p,t)
=
\sqrt{2H(\delta_t p)}+O_{\mathcal K}(1).
\end{equation}
These estimates are uniform on each fixed compact subset of $P'$.
\section{Hamiltonian volumes and the Prym coefficient}
\label{sec:hamiltonian-volumes}

\subsection{Hamiltonian sublevel volumes}

For $s\geq s_0$ and $p\in P$, let $T_s(p)\geq1$ be the unique number satisfying
$$
H(\delta_{T_s(p)}p)=s.
$$
Uniqueness follows from strict monotonicity of $H$ along each orbit of the
positive real subgroup. The exact radial formula \eqref{eq:radial-volume}
gives
\begin{equation}\label{eq:A-radial-exact}
\begin{aligned}
A(s)-A(s_0)
&=
\int_P\int_1^{T_s(p)}t^{2k-1}\,\dd t\,\dd\mu_P(p)
\\
&=
\frac1{2k}\int_P\bigl(T_s(p)^{2k}-1\bigr)\,\dd\mu_P(p).
\end{aligned}
\end{equation}

If $p\in P'$, apply \eqref{eq:H-ray-asymptotic} on a compact neighborhood
$\mathcal U\Subset P'$ of $p$. This gives the pointwise limit
$$
\frac{T_s(p)^2}{s}
\longrightarrow
\frac{1}{2\qrad(p)}.
$$
The uniform estimate \eqref{eq:uniform-quadratic-lower} gives
$$
s=H(\delta_{T_s(p)}p)
\geq m_0T_s(p)^2,
$$
and hence
\begin{equation}\label{eq:T-domination}
0\leq\frac{T_s(p)^2}{s}\leq m_0^{-1}
\end{equation}
for all $p\in P$ and $s\geq s_0$. Divide \eqref{eq:A-radial-exact} by
$s^k$. Equations \eqref{eq:singular-measure-zero} and
\eqref{eq:T-domination}, together with dominated convergence, yield
$$
\lim_{s\to\infty}\frac{A(s)}{s^k}
=
\frac1{2k}
\int_{P'}
\left(\frac{1}{2\qrad(p)}\right)^k\,\dd\mu_P(p).
$$
The radial expression for the leading coefficient is therefore
\begin{equation}\label{eq:CH-radial}
\cH
=
\frac1{2k}
\int_{P'}
\left(\frac{1}{2\qrad(p)}\right)^k\,\dd\mu_P(p),
\end{equation}
where the equality with the explicit constant in
\eqref{eq:explicit-leading-constants} is established below. By the
definition of $m_0$,
$$
\qrad(p)\geq m_0/2
\qquad (p\in P').
$$
Since $P$ is compact and $\mu_P(P)<\infty$, the integral is finite.  It is
strictly positive because $P'$ has full measure in $P$.  Therefore
\begin{equation}\label{eq:A-o}
A(s)=\cH s^k+o(s^k).
\end{equation}

For a compact subset $\mathcal K\Subset P'$, set
$$
\cH(\mathcal K)
:=
\frac1{2k}
\int_{\mathcal K}
\left(\frac{1}{2\qrad(p)}\right)^k\,\dd\mu_P(p).
$$
If $\mathcal K_j\Subset P'$ is an increasing compact exhaustion, monotone
convergence gives
\begin{equation}\label{eq:CHK-exhaust}
\cH(\mathcal K_j)\longrightarrow\cH.
\end{equation}

The global bound \eqref{eq:uniform-quadratic-lower} controls the radial
cutoff even on $P\setminus P'$. Its measure-zero contribution therefore
does not affect the limit. We next compute the radial coefficient by
symplectic reduction.

\subsection{Symplectic reduction and polynomiality}

\begin{definition}
\label{def:reduced-data}
For $s\geq s_0$, set
$$
P_s:=H^{-1}(s),
\qquad
Z_s:=P_s/S^1,
$$
and let $\pi_s:P_s\to Z_s$ be the quotient map.  Above the largest critical
value, Hausel's biholomorphic identifications allow us to regard the $Z_s$ as
a single compact Kähler orbifold $Z$; the reduced Kähler form at level $s$ is
written $\omega_s$.  All integrals over $Z$ are orbifold integrals.
\end{definition}

Choose an $S^1$-invariant connection one-form $\vartheta$ with
$\vartheta(K)=1$ and $\int_{S^1}\vartheta=2\pi$.  With the right circle
action on $P_s$, define
$$
L_Z:=(P_s\times\mathbb C)/{\sim},
\qquad
(p,z)\sim(p\cdot\e^{\ii\theta},\e^{\ii\theta}z),
$$
and set
\begin{equation}\label{eq:euler-class-definition}
e:=c_1(L_Z)
=\left[\frac{\ii F_\nabla}{2\pi}\right]
=\left[\frac{\dd\vartheta}{2\pi}\right]
\in H^2_{\orb}(Z;\mathbb R).
\end{equation}

Trivialize the end by the flow of $\nabla H/|\nabla H|^2$. On $P_s$,
choose the connection
$$
\vartheta_s:=\left.\iota_{\nabla H/|\nabla H|^2}\omega_I\right|_{P_s}.
$$
Then $\vartheta_s(K)=1$, and the Hamiltonian normal form is
\begin{equation}\label{eq:normal-form}
\omega_I=\pi_s^*\omega_s+\dd H\wedge\vartheta_s.
\end{equation}
Since
$$
(\dd H\wedge\vartheta_s)^2=0,
\qquad
(\pi_s^*\omega_s)^{2k}=0,
$$
we have
$$
\frac{\omega_I^{2k}}{(2k)!}
=
\dd H\wedge\vartheta_s\wedge
\frac{\pi_s^*\omega_s^{2k-1}}{(2k-1)!}.
$$
Integration along the circle fiber gives
\begin{equation}\label{eq:DH-density}
A'(s)
=
2\pi\int_Z
\frac{\omega_s^{2k-1}}{(2k-1)!}.
\end{equation}
Orbifold integration accounts for finite stabilizers, so no additional
division by the stabilizer order is required.

Closedness of $\omega_I$ determines the variation of the reduced class.

\begin{lemma}
With the reduced spaces and the class $e$ from
\eqref{eq:euler-class-definition}, one has
\begin{equation}\label{eq:DH-linear}
[\omega_s]
=
[\omega_{s_0}]+2\pi(s-s_0)e.
\end{equation}
\end{lemma}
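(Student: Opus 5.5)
We prove this via the Duistermaat--Heckman variation argument, using only closedness of $\omega_I$ and the normal form \eqref{eq:normal-form}.

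\textbf{Step 1: identify the levels.} By Lemma~\ref{lem:radial-parametrization} there are no critical points of $H$ on $\{H\geq s_0\}$. Hence the normalized gradient flow of $\nabla H/|\nabla H|^2$ gives an $S^1$-equivariant diffeomorphism
$$
H^{-1}([s_0,\infty))\cong P_{s_0}\times[s_0,\infty),
$$
under which $H$ becomes the second coordinate. The flow commutes with the circle action because $H$ and $g_{L^2}$ are $S^1$-invariant. Thus all $P_s$ are identified with $P:=P_{s_0}$ equivariantly, and all $Z_s$ with $Z$. This is the identification with respect to which we compare classes in $H^2_{\orb}(Z;\mathbb R)$. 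It agrees with Hausel's identification up to isotopy, which suffices for cohomology.

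\textbf{Step 2: pull back and expand $\omega_I$.} Pull $\omega_I$ back to $P\times[s_0,\infty)$ and write it as
$$
\omega_I=\beta_s+\dd s\wedge\gamma_s,
$$
where $\beta_s$ and $\gamma_s$ are $s$-dependent forms on $P$. By \eqref{eq:normal-form}, transported along the flow, $\beta_s=\pi^*\omega_s$ and $\gamma_s=\vartheta_s$. The flow vector field $Y=\nabla H/|\nabla H|^2$ satisfies $\iota_Y\omega_I=\vartheta_s$ on levels and $\dd H(Y)=1$, so this matches the decomposition. Closedness of $\omega_I$ on the product gives, after separating components,
$$
\dd_P\beta_s=0,
\qquad
\partial_s\beta_s=\dd_P\gamma_s .
$$
Hence
$$
\frac{\dd}{\dd s}\pi^*\omega_s=\dd\vartheta_s .
$$

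\textbf{Step 3: descend and integrate.} Each $\vartheta_s$ is an $S^1$-invariant connection form with $\vartheta_s(K)=1$. Its curvature $\dd\vartheta_s$ is therefore basic, equal to $\pi^*$ of a form representing $2\pi e$ by \eqref{eq:euler-class-definition}. The class is independent of the choice of connection, since two connections differ by a basic one-form. Since $\pi^*$ is injective on basic forms, $\frac{\dd}{\dd s}\omega_s$ is a closed form on $Z$ representing $2\pi e$ for every $s$. Integrating from $s_0$ to $s$ gives \eqref{eq:DH-linear}.

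\textbf{Main obstacle: signs and normalization.} The delicate point is Step 3, namely matching the factor $2\pi$ and the sign with the convention that $L_Z$ is built from the right action with $\e^{\ii\theta}z$. I will check the normalization on the fiber using $\int_{S^1}\vartheta=2\pi$. A local computation in a model $\mathbb C$ with $H=|z|^2/2$ shows that the area of the reduced space grows with the same orientation as $c_1(L_Z)$, fixing the sign.

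The orbifold issue is harmless. Finite stabilizers only mean that basic forms descend to orbifold forms, and de Rham cohomology of the orbifold computes $H^2_{\orb}(Z;\mathbb R)$.
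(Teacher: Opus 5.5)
Your proposal is correct and follows the paper's proof. You decompose $\omega_I$ in the trivialization given by the flow of $\nabla H/|\nabla H|^2$, use closedness to get $\pi_s^*(\partial_s\omega_s)=\dd\vartheta_s$, observe that these curvatures represent $2\pi e$ by \eqref{eq:euler-class-definition}, and integrate in $s$; you only spell out the component computation and the independence of the connection in more detail, and since that flow reparametrizes the $\delta_t$-orbits, your identification of the levels coincides exactly with the one the paper uses rather than merely up to isotopy.
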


\begin{proof}
Taking the exterior derivative of \eqref{eq:normal-form} in the chosen
trivialization gives
$$
\pi_s^*\left(\frac{\partial\omega_s}{\partial s}\right)
=\dd_{P_s}\vartheta_s.
$$
The connections $\vartheta_s$ have period $2\pi$, and their curvatures
descend to representatives of the fixed class $2\pi e$ by
\eqref{eq:euler-class-definition}. Integrating in $s$ proves
\eqref{eq:DH-linear}.
\end{proof}

Let
$$
\mathbb P\B'
:=
\{[q]\in\mathbb P\B:q\in\B'\}
$$
be the regular projective locus.  Since
$\Hit(\delta_t x)=t^2\Hit(x)$, the Hitchin map descends after reduction
to the projectivized map
$$
\bar\chi:Z\longrightarrow\mathbb P\B\simeq\mathbb P^{k-1}.
$$
Hausel proves that
$$
L_Z^{\otimes2}
\simeq
\bar\chi^*\mathcal O_{\mathbb P^{k-1}}(1).
$$
Let
$$
h:=c_1\bigl(\mathcal O_{\mathbb P^{k-1}}(1)\bigr),
\qquad
\int_{\mathbb P^{k-1}}h^{k-1}=1.
$$
Then
\begin{equation}\label{eq:e-hyperplane}
2e=\bar\chi^*h,
\qquad
e^k=0.
\end{equation}

Expanding \eqref{eq:DH-density} using \eqref{eq:DH-linear}, the terms
with at least $k$ factors of $e$ vanish. Thus
$$
A'(s)
=
2\pi
\sum_{j=0}^{k-1}
\frac{(2\pi)^j(s-s_0)^j}{j!(2k-1-j)!}
\int_Z e^j\wedge\omega_{s_0}^{2k-1-j}.
$$
Integration gives
$$
A(s)
=
A(s_0)
+
2\pi
\sum_{j=0}^{k-1}
\frac{(2\pi)^j(s-s_0)^{j+1}}{(j+1)!(2k-1-j)!}
\int_Z e^j\wedge\omega_{s_0}^{2k-1-j}.
$$
Thus $A(s)$ is a polynomial of degree at most $k$ for $s>s_0$. Equation
\eqref{eq:A-o} and the positivity of $\cH$ show that its degree is exactly
$k$. Comparing leading coefficients gives
\begin{equation}\label{eq:CH-intersection}
\cH
=
\frac{(2\pi)^k}{(k!)^2}
\int_Z e^{k-1}\wedge\omega_{s_0}^k.
\end{equation}
The integrand has real degree
$2(k-1)+2k=4k-2=\dim_{\mathbb R}Z$.  This identifies the radial coefficient
in \eqref{eq:CH-radial}, and \eqref{eq:sublevel-main} follows.

The formula is independent of $s_0$, since
$$
\frac{\dd}{\dd s_0}
\int_Z e^{k-1}\wedge\omega_{s_0}^k
=
2\pi k
\int_Z e^k\wedge\omega_{s_0}^{k-1}
=0.
$$

\subsection{Prym expression for the leading coefficient}

\begin{definition}
\label{def:kummer-fiber}
For $[b]\in\mathbb P\B'$, choose a nonzero representative $b\in\B'$ and
let $\Prym_b$ be the corresponding Prym variety.  For $c=t^2$, spectral dilation by $t$ identifies the Prym varieties
associated with $b$ and $cb$. The two choices of $t$ differ by the spectral
involution, which acts as inversion on the Prym. Thus the induced
identification of the Kummer quotients is independent of this choice. We write
$$
K_b:=[\Prym_b/\{\pm1\}]
$$
for the quotient orbifold and
$$
\pi_b:\Prym_b\longrightarrow K_b
$$
for its degree-two orbifold covering.  If $\omega_{s_0}$ is the reduced form,
set
$$
\omega_{\Prym,b}:=\pi_b^*(\omega_{s_0}|_{K_b}).
$$
\end{definition}

By \eqref{eq:e-hyperplane} and fiber integration,
$$
\int_Z e^{k-1}\wedge\omega_{s_0}^k
=
2^{-(k-1)}
\int_{K_b}^{\orb}\omega_{s_0}^k.
$$
Substitution into \eqref{eq:CH-intersection} gives
$$
\cH
=
\frac{2\pi^k}{(k!)^2}
\int_{K_b}^{\orb}\omega_{s_0}^k
=
\frac{2\pi^k}{k!}
\Vol^{\orb}_{\omega_{s_0}}(K_b).
$$
Orbifold integration gives
$$
\Vol^{\orb}_{\omega_{s_0}}(K_b)
=
\frac12\Vol_{\omega_{\Prym,b}}(\Prym_b).
$$
Hence
$$
\cH
=
\frac{\pi^k}{k!}
\Vol_{\omega_{\Prym,b}}(\Prym_b).
$$

We now compare the reduced class with the class on a Hitchin fiber.
Use the notation of Definition~\ref{def:kummer-fiber} and fix
$[b]\in\mathbb P\B'$. Choose a sufficiently small nonzero representative,
still denoted by $b\in\B'$, such that the compact fiber $\Hit^{-1}(b)$ lies
in $\{H<s_0\}$. Such a representative exists: otherwise there would be a
sequence $b_n\to0$ and points $x_n\in\Hit^{-1}(b_n)$ with
$H(x_n)\geq s_0$. Properness of the Hitchin map over a compact neighborhood
of the origin would give a convergent subsequence with limit in the
nilpotent cone, contradicting $s_0>\max_{\N}H$.

Choose a fixed point of the involution defining the Kummer quotient as the
origin of the Prym torsor. The involution then acts as inversion. Denote
the inclusion by
$$
j_b:\Prym_b\longrightarrow\M'.
$$
Let $\Theta_b\in H^2(\Prym_b;\mathbb Z)$ be the restriction to $\Prym_b$ of
the principal polarization class on $\Jac(S_b)$.

\begin{proposition}\label{prop:prym-polarization}
With these choices,
$$
[\omega_{\Prym,b}]
=
[j_b^*\omega_I]
=
4\pi^2\Theta_b.
$$
The polarization has type
$$
(1^{\,2g-3},2^{\,g}),
$$
and
$$
\int_{\Prym_b}\frac{\Theta_b^k}{k!}=2^g.
$$
\end{proposition}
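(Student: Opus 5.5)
The plan is to prove the three assertions separately. First, $[j_b^*\omega_I]$ does not depend on which fiber of the regular family we use, so it can be computed on the semiflat model far out along a scaling ray. Second, the semiflat fiber form is an explicit translation-invariant form on $\Prym_b$, and its class is $4\pi^2$ times the intersection form of $S_b$ on the anti-invariant lattice, i.e.\ $4\pi^2\Theta_b$. Third, the type and the volume follow from Mumford's description of Prym varieties of ramified double covers.

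The first equality, $[\omega_{\Prym,b}]=[j_b^*\omega_I]$, should be immediate. On $P_{s}\cap\Hit^{-1}(\mathbb R_{>0}b)$ the circle direction is transverse to the fiber, and the normal form \eqref{eq:normal-form} restricted to a fiber lying over a fixed $[b]$ leaves only $\pi_s^*\omega_s$, because $\dd H\wedge\vartheta_s$ restricts to zero there. I would check that $\Hit^{-1}(b)$ can be moved by the gradient flow of $H$ onto a fiber inside $P_{s_0}$. This uses the radial diffeomorphism of Lemma~\ref{lem:radial-parametrization} together with the fact that scaling maps fibers to fibers. The two restricted forms are then cohomologous, since $\omega_I$ is closed and the fibers are homotopic inside $\M'$.

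For the value $4\pi^2\Theta_b$, I would again use closedness of $\omega_I$. Over a path in $\B'$ the fibers are homotopic, with the identification given by Gauss--Manin transport on $H^1(S_q,\mathbb Z)^-$, so the class of $j_{t^2b}^*\omega_I$ is constant in $t$ under the spectral-dilation identification. The exponential estimate of Proposition~\ref{prop:compact-comparison} is uniform on the compact fiber, so as $t\to\infty$ the class equals that of $\omega_{\sfm}$ restricted to a fiber. That form is flat and vertical. By Definition~\ref{def:semiflat-metric}, on anti-invariant $(0,1)$-forms $\alpha,\beta$ it equals, up to sign, the real part of $\ii$ times $2\ii\int_{S_b}\alpha\wedge\overline\beta$. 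I would then identify $T\Prym_b=H^{0,1}(S_b)^-$ with the period lattice: a line-bundle deformation $a\in H^{0,1}$ changes the holonomy by $\exp\oint(a-\bar a)$, so the lattice is $2\pi\ii H^1(S_b,\mathbb Z)^-$ under $a\mapsto a-\bar a$. Evaluating the flat form on this lattice gives $(2\pi)^2$ times the cup-product pairing $\int_{S_b}\gamma_1\wedge\gamma_2$. This pairing is exactly the principal polarization of $\Jac(S_b)$ restricted to the anti-invariant part, that is, $4\pi^2\Theta_b$.

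The main obstacle is this normalization bookkeeping: the factor $2\ii$, the trace in the rank-two representation, the anti-invariant pushforward, and the $2\pi$ in holonomies. I would first test it on a single explicit anti-invariant cycle. The type is Mumford's theorem for a double cover ramified at $4g-4>0$ points. There $\operatorname{Nm}$ has connected kernel, $p_b^*\Jac(X)$ and $\Prym_b$ are complementary, and $\Theta|_{p^*\Jac X}=2\Theta_X$. Hence the restriction to $\Prym_b$ has type $(1^{\,(3g-3)-g},2^{\,g})=(1^{2g-3},2^g)$. Finally, $\int\Theta_b^k/k!$ is the Pfaffian of the polarization, which is the product of the elementary divisors, namely $2^g$.
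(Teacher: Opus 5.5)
Your proposal is correct and follows essentially the same route as the paper. You move each point of $\Hit^{-1}(b)$ along its own scaling orbit to $P$, which is the paper's map $\iota_b$; note that no single Hitchin fiber lies in $P_{s_0}$, so the flow time must depend on the point. You then use closedness of $\omega_I$ and the exponential semiflat comparison on the compact fiber to reduce the periods on integral two-tori to the flat vertical form, obtain $4\pi^2$ times the cup product on $H^1(S_b;\mathbb Z)^-$, and finish with Mumford's type $(1^{2g-3},2^{g})$ and the Pfaffian.

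The differences are cosmetic. You read the vertical semiflat form off the spectral-curve pairing, whereas the paper uses the trace computation with $\ii\alpha D$ on $X$; the two agree because $\tr(D^2)=2$ offsets the degree of $p_b$. You also sketch Mumford's complementary-subvariety argument, where the paper simply cites it.
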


\begin{proof}
We first compare the reduced form with the form on the Hitchin fiber. For each
$L\in\Prym_b$, its scaling orbit leaves every compact subset as
$t\to\infty$. Properness of $H$ and Lemma~\ref{lem:radial-parametrization}
therefore give a unique $t_b(L)>1$ such that
$$
H\bigl(\delta_{t_b(L)}(j_b(L))\bigr)=s_0.
$$
Since
$$
\frac{\partial}{\partial\log t}H(\delta_t x)=|\xi|^2>0
$$
along the orbit, the implicit function theorem shows that $t_b$ is smooth. Set
$$
\iota_b(L):=\delta_{t_b(L)}(j_b(L))\in P.
$$
Hausel's description of the fiber over $[b]$
\cite[Lemma~6.10]{Hausel1998} shows that the composite of $\iota_b$ with the
quotient map $\pi_{s_0}:P\to Z$ is the Kummer quotient
$\pi_b:\Prym_b\to K_b$. Restricting \eqref{eq:normal-form} to $P$ gives
$\omega_I|_P=\pi_{s_0}^*\omega_{s_0}$, and hence
$$
\omega_{\Prym,b}
=
\pi_b^*(\omega_{s_0}|_{K_b})
=
\iota_b^*\omega_I.
$$
The map
$$
(L,u)\longmapsto
\delta_{\exp(u\log t_b(L))}(j_b(L)),
\qquad 0\leq u\leq1,
$$
is a homotopy from $j_b$ to $\iota_b$. Since $\omega_I$ is closed,
$$
[\omega_{\Prym,b}]=[j_b^*\omega_I].
$$

We now compute the periods of the right-hand side.  After choosing the
origin above, the unitary Prym torus is
$$
\frac{H^1(S_b;\ii\mathbb R)^-}
     {2\pi\ii H^1(S_b;\mathbb Z)^-}.
$$
For $u,v\in H^1(S_b;\mathbb Z)^-$, let $T_{u,v}$ be the integral two-torus
spanned by $2\pi\ii u$ and $2\pi\ii v$. Spectral dilation
$(x,\lambda)\mapsto(x,t\lambda)$ identifies the anti-invariant integral
lattices of $S_b$ and $S_{t^2b}$. Set
$$
j_{b,t}:=\delta_t\circ j_b.
$$
The maps $j_{b,t}$ are homotopic to $j_b$. Since $\omega_I$ is closed,
its period on $T_{u,v}$ is constant along the scaling ray.
Spectral dilation
preserves the semiflat norms of the two vertical tangent fields.  Both
metrics are Kähler with respect to the same complex structure $I$, so
Proposition~\ref{prop:compact-comparison} also gives
$$
|\omega_I-\omega_{\sfm}|_{g_{\sfm}}
\leq C\e^{-ct}
$$
on this compact family.  Consequently,
$$
\int_{T_{u,v}}j_{b,t}^*\omega_I
=
\int_{T_{u,v}}j_{b,t}^*\omega_{\sfm}
+O(\e^{-ct}).
$$
For completeness, we record the normalization of the vertical semiflat
form.  Let
$$
D=\begin{pmatrix}1&0\\0&-1\end{pmatrix}.
$$
For anti-invariant real harmonic one-forms $\alpha,\beta$, let
$a_\alpha,a_\beta$ be the corresponding vertical unitary connection
variations.  In the spectral splitting, their pullbacks to $S_b$ are
$\ii\alpha D$ and $\ii\beta D$.  Since $p_b:S_b\to X$ has degree two and
$\tr(D^2)=2$,
$$
-\int_X\tr(a_\alpha\wedge a_\beta)
=-\frac12\int_{S_b}\tr(\ii\alpha D\wedge\ii\beta D)
=\int_{S_b}\alpha\wedge\beta.
$$
Under the semiflat identification described above, the vertical Kähler
form is the $L^2$ pairing of these harmonic connection variations.  Thus
$$
\omega_{\sfm}(a_\alpha,a_\beta)
=
\int_{S_b}\alpha\wedge\beta.
$$
Letting $t\to\infty$ gives
$$
\int_{T_{u,v}}j_b^*\omega_I
=
(2\pi)^2\int_{S_b}u\wedge v.
$$
The harmonic representatives are smooth on the compact spectral curve, so no
local correction terms arise at the ramification points.  The restriction of the
intersection pairing to the Prym integral lattice is the Riemann form of
$\Theta_b$. Integral two-tori generate the second homology of the torus. Hence
$$
[j_b^*\omega_I]=4\pi^2\Theta_b.
$$

For a smooth double cover ramified at $2r$ points, the restricted principal
polarization has type
$$
(1^{\,r-1},2^{\,g});
$$
see \cite{Mumford1974,Beauville1977}. Here the ramification divisor is the
zero divisor of $b$, so $2r=4g-4$ and $r=2g-2$. The type is therefore
$(1^{\,2g-3},2^{\,g})$, and
$$
(2g-3)+g=3g-3=k.
$$
A polarization of type $(d_1,\ldots,d_k)$ satisfies
$$
\int_{\Prym_b}\frac{\Theta_b^k}{k!}
=
d_1\cdots d_k.
$$
Substitution of the preceding type gives
$$
\int_{\Prym_b}\frac{\Theta_b^k}{k!}=2^g.
$$
\end{proof}

Substituting the polarization class gives the volume explicitly.

\begin{corollary}\label{cor:explicit-coefficients}
With the normalization used in this paper,
$$
\Vol_{\omega_{\Prym,b}}(\Prym_b)
=
2^g(4\pi^2)^k.
$$
Consequently,
$$
\cH
=
\frac{2^{g+2k}\pi^{3k}}{k!}
=
\frac{2^{7g-6}\pi^{9g-9}}{(3g-3)!}.
$$
Moreover,
$$
\cball
=2^{-k}\cH
=
\frac{2^{4g-3}\pi^{9g-9}}{(3g-3)!}.
$$
\end{corollary}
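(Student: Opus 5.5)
The corollary is a direct substitution into quantities the paper has already established, so the plan is to chain three facts together.

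\emph{Step 1: the Prym volume.} Proposition~\ref{prop:prym-polarization} gives $[\omega_{\Prym,b}]=4\pi^2\Theta_b$. Since $\omega_{\Prym,b}=\iota_b^*\omega_I$ is a closed real $2$-form on the compact torus $\Prym_b$, its Liouville volume depends only on its cohomology class:
$$
\Vol_{\omega_{\Prym,b}}(\Prym_b)=\int_{\Prym_b}\frac{\omega_{\Prym,b}^k}{k!}
=(4\pi^2)^k\int_{\Prym_b}\frac{\Theta_b^k}{k!}=2^g(4\pi^2)^k.
$$
The last equality uses the Pfaffian count for type $(1^{2g-3},2^g)$. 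The only subtlety is positivity: the volume is computed as the integral of the top power, and it is positive because $\omega_{\Prym,b}$ is the pullback under the holomorphic immersion $\iota_b$ of a Kähler form, hence Kähler on $\Prym_b$.

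\emph{Step 2: the constant $\cH$.} Insert the volume into $\cH=\frac{\pi^k}{k!}\Vol_{\omega_{\Prym,b}}(\Prym_b)$, derived just before the proposition. Using $4^k=2^{2k}$,
$$
\cH=\frac{\pi^k\,2^g\,2^{2k}\pi^{2k}}{k!}=\frac{2^{g+2k}\pi^{3k}}{k!}.
$$
With $k=3g-3$ we have $g+2k=7g-6$ and $3k=9g-9$, which matches \eqref{eq:explicit-leading-constants}.

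\emph{Step 3: the constant $\cball$.} This constant is defined as $2^{-k}\cH$, so $2^{7g-6-(3g-3)}=2^{4g-3}$. If one wants the content rather than only the definition, note that the ball asymptotics will come from $\sqrt{2H}\approx d_{L^2}$ via \eqref{eq:lipschitz-global} and \eqref{eq:length-ray-asymptotic}. Then $\{d<R\}$ corresponds to $\{H<R^2/2\}$, and $\cH(R^2/2)^k=2^{-k}\cH R^{2k}$. That identification belongs to the later ball argument, not to this corollary.

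The main obstacle is not the arithmetic but checking that the chain of normalizations is consistent. Two points need care. First, the factor $2$ from the Kummer orbifold and the factor $2^{-(k-1)}$ from $2e=\bar\chi^*h$ should combine to give $\frac{2\pi^k}{(k!)^2}\to\frac{\pi^k}{k!}$. Second, the period computation $(2\pi)^2\int u\wedge v$ should match the chosen $L^2$ normalization. I would recheck these by verifying that $(2\pi)^k/(k!)^2\cdot 2^{-(k-1)}\cdot k!\cdot\frac12\cdot 2^g(4\pi^2)^k$ reproduces $2^{g+2k}\pi^{3k}/k!$.
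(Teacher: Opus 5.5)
Your proposal is correct and follows the paper's argument. You substitute $[\omega_{\Prym,b}]=4\pi^2\Theta_b$ and $\int_{\Prym_b}\Theta_b^k/k!=2^g$ from Proposition~\ref{prop:prym-polarization} into $\cH=\frac{\pi^k}{k!}\Vol_{\omega_{\Prym,b}}(\Prym_b)$ and use $\cball=2^{-k}\cH$; your check that the Kummer factor $\frac12$ and the factor $2^{-(k-1)}$ combine correctly also holds.

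One correction to your positivity aside, which the computation does not need since $\Vol$ is just $\int\omega^k/k!$. The map $\iota_b(L)=\delta_{t_b(L)}(j_b(L))$ is only known to be smooth, not holomorphic, because the rescaling factor $t_b(L)$ depends on $L$. Positivity should instead come from $\omega_{\Prym,b}=\pi_b^*(\omega_{s_0}|_{K_b})$: the pullback, under the holomorphic orbifold covering $\pi_b$, of the Kähler form $\omega_{s_0}$ restricted to the complex suborbifold $K_b\subset Z$.
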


\begin{remark}
If $g_{L^2}$ is multiplied by $c>0$, then $\omega_I$ and $H$ are multiplied
by $c$.  The sublevel and ball coefficients are multiplied by $c^k$, while
$e=c_1(L_Z)$ and the slope in \eqref{eq:DH-linear} are unchanged.  Thus
\eqref{eq:CH-intersection} and \eqref{eq:ball-main} transform consistently.
\end{remark}
\section{Geodesic balls and weighted volumes}\label{sec:balls-weighted}

\subsection{Geodesic balls}

The $1$-Lipschitz estimate \eqref{eq:lipschitz-global} gives
$$
B_{L^2}(p_0,R)
\subset
\left\{
H\leq
\frac12\bigl(R+\sqrt{2H(p_0)}\bigr)^2
\right\}.
$$
Applying \eqref{eq:sublevel-main}, we obtain
$$
\begin{aligned}
\Vball_{p_0}(R)
&\leq
A\left(
\frac12\bigl(R+\sqrt{2H(p_0)}\bigr)^2
\right)
\\
&=
2^{-k}\cH R^{2k}+O(R^{2k-1}).
\end{aligned}
$$
Therefore
\begin{equation}\label{eq:ball-limsup}
\limsup_{R\to\infty}
R^{-2k}\Vball_{p_0}(R)
\leq2^{-k}\cH.
\end{equation}

For the lower bound, fix a compact subset $\mathcal K\Subset P'$. Its distance from $p_0$ is
bounded, so \eqref{eq:length-ray-asymptotic} gives a constant
$C_{\mathcal K}>0$ such that
$$
d_{L^2}(p_0,\delta_t p)
\leq\sqrt{2H(\delta_t p)}+C_{\mathcal K}
\qquad(p\in\mathcal K,\ t\geq1).
$$
For $R>C_{\mathcal K}$,
$$
\left\{
\delta_t p:
 p\in\mathcal K,
\quad
 t\geq1,
\quad
H(\delta_t p)<\frac12(R-C_{\mathcal K})^2
\right\}
\subset B_{L^2}(p_0,R).
$$

Applying \eqref{eq:radial-volume} and the asymptotic
\eqref{eq:H-ray-asymptotic}, uniformly on the fixed compact set, to the set
on the left gives
$$
\Vol_{g_{L^2}}
\left\{
\delta_t p:
 p\in\mathcal K,
\quad t\geq1,
\quad H(\delta_t p)<s
\right\}
=
\cH(\mathcal K)s^k+o_{\mathcal K}(s^k).
$$
Taking $s=\frac12(R-C_{\mathcal K})^2$ yields
$$
\liminf_{R\to\infty}
R^{-2k}\Vball_{p_0}(R)
\geq
2^{-k}\cH(\mathcal K).
$$
Choose a compact exhaustion $\mathcal K_j\Subset P'$. Equation~\eqref{eq:CHK-exhaust}
and the limit $j\to\infty$ give
$$
\liminf_{R\to\infty}
R^{-2k}\Vball_{p_0}(R)
\geq
2^{-k}\cH.
$$
Together with \eqref{eq:ball-limsup}, this proves \eqref{eq:ball-main}.

The center $p_0$ was arbitrary, so the leading coefficient is the same for
every center.

\subsection{Exponentially weighted volume}

Pushforward by $H$ and integration by parts give
$$
V_H(\varepsilon)
=\int_0^\infty\e^{-\varepsilon s}\,\dd A(s)
=\varepsilon\int_0^\infty\e^{-\varepsilon s}A(s)\,\dd s.
$$
The boundary terms vanish: $H^{-1}(0)$ has zero ambient volume, so $A(0)=0$,
and $A(s)$ has polynomial growth.
Using \eqref{eq:sublevel-main} and $u=\varepsilon s$, we obtain
$$
V_H(\varepsilon)
=\cH\varepsilon^{-k}\int_0^\infty\e^{-u}u^k\,\dd u
+O(\varepsilon^{-k+1})
=k!\cH\varepsilon^{-k}+O(\varepsilon^{-k+1}).
$$
This proves \eqref{eq:weighted-main} and completes the proof of
Theorem~\ref{thm:main}. Since $A(s)$ is eventually polynomial, the weighted
volume has a Laurent expansion at $\varepsilon=0$. Writing
$\Coeff_{\varepsilon^{-k}}$ for its leading coefficient, we have
\begin{equation}\label{eq:weighted-coefficient}
\Coeff_{\varepsilon^{-k}}V_H(\varepsilon)
=k!\cH
=\pi^k\Vol_{\omega_{\Prym,b}}(\Prym_b).
\end{equation}
The next section obtains this coefficient from equivariant localization.
\section{Equivariant localization}\label{sec:localization}

The fixed locus $\M^{S^1}=\bigsqcup_\alpha F_\alpha$ is contained in the
nilpotent cone, since $\Hit(x)=\e^{2\ii\vartheta}\Hit(x)$ at a fixed point.
Thus its connected components are compact and finite in number.  Write
$d_\alpha=H|_{F_\alpha}$ and let $N_\alpha$ be the complex normal bundle of
$F_\alpha$.

\begin{definition}\label{def:equivariant-convention}
Regard $\varepsilon$ as a formal parameter of cohomological degree two and set
$$
\nu:=\frac{\varepsilon}{2\pi},
\qquad
d_{S^1}:=d-\varepsilon\iota_K.
$$
For a complex line bundle of circle weight $m$, set
$$
\Eul_{S^1}(L;\varepsilon)=c_1(L)+m\nu.
$$
The class $\omega_I-\varepsilon H$ is equivariantly closed.  Fixed-point
integrals denote their top-degree components in localized equivariant
cohomology.  The integral generator $u=c_1(\mathbb C_1)$ in
\cite{CHS2020} is identified with $\nu$.
\end{definition}

The polynomial volume bound allows the compact localization formula to
pass to the noncompact space.

\begin{proposition}\label{prop:noncompact-localization}
For every $\varepsilon>0$,
$$
V_H(\varepsilon)
=
\sum_\alpha
\e^{-\varepsilon d_\alpha}
\int_{F_\alpha}
\frac{\e^{\omega_I|_{F_\alpha}}}
{\Eul_{S^1}(N_\alpha;\varepsilon)}.
$$
The sum is taken before its Laurent expansion; individual summands may
have poles of order greater than $k$.
\end{proposition}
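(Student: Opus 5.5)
The plan is to obtain the noncompact localization formula as a limit of a localization formula on the compact sublevel sets $\{H\le s\}$, with the boundary contribution computed by the Duistermaat--Heckman data of the reduced spaces $Z_s$.

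\textbf{Step 1: localization on the sublevel set.} Fix $s\ge s_0$ and write $M_s:=\{H\le s\}$. This is a compact manifold with boundary $P_s$, and it contains every fixed component $F_\alpha$, because $d_\alpha\le\max_{\N}H<s_0$. The form $\e^{\omega_I-\varepsilon H}$ is equivariantly closed for $d_{S^1}$. Its top-degree part is $\e^{-\varepsilon H}\,\dd V_{L^2}$. On $M_s\setminus\bigsqcup_\alpha F_\alpha$ I take the invariant one-form $\beta=g_{L^2}(K,\cdot)/|K|^2$. Then $\e^{\omega_I-\varepsilon H}=d_{S^1}\bigl(\beta\wedge(d_{S^1}\beta)^{-1}\wedge\e^{\omega_I-\varepsilon H}\bigr)$ away from the fixed locus. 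The standard Berline--Vergne/Atiyah--Bott argument (Stokes on $M_s$ minus small tubes) then gives
$$
\int_{M_s}\e^{-\varepsilon H}\dd V_{L^2}
=\sum_\alpha\e^{-\varepsilon d_\alpha}\int_{F_\alpha}\frac{\e^{\omega_I|_{F_\alpha}}}{\Eul_{S^1}(N_\alpha;\varepsilon)}
+\mathcal R_s(\varepsilon),
$$
where $\mathcal R_s(\varepsilon)$ is the boundary integral over $P_s$. Two normalizations have to be checked against Definition~\ref{def:equivariant-convention}: the sign convention $d-\varepsilon\iota_K$ with $\dd H=-\iota_K\omega_I$, and the weight $\nu=\varepsilon/2\pi$ for a period-$2\pi$ generator. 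The local normal form near each $F_\alpha$ fixes both.

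\textbf{Step 2: evaluate the boundary term.} On $P_s$ I use the connection $\vartheta_s$ of \eqref{eq:normal-form}, for which $\beta|_{P_s}$ may be replaced by $\vartheta_s$ up to an exact basic correction. Integrating over the circle fibre turns $\mathcal R_s(\varepsilon)$ into the reduced expression
$$
-2\pi\,\e^{-\varepsilon s}\int_Z\frac{\e^{\omega_s}}{\varepsilon-2\pi e},
$$
up to a sign fixed by the orientation of $P_s$ as the outer boundary. Expanding $(\varepsilon-2\pi e)^{-1}=\varepsilon^{-1}\sum_j(2\pi e/\varepsilon)^j$ gives a finite sum, since $e^k=0$. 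The bound $e^k=0$ together with \eqref{eq:DH-linear} shows that this is $\e^{-\varepsilon s}$ times a polynomial in $s$ of degree at most $k$, with coefficients rational in $\varepsilon$. In particular $\mathcal R_s(\varepsilon)\to0$ as $s\to\infty$ for each fixed $\varepsilon>0$.

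An alternative check is to note that, by \eqref{eq:DH-density}, $\mathcal R_s(\varepsilon)$ must equal $\int_s^\infty \e^{-\varepsilon\sigma}A'(\sigma)\,\dd\sigma$. This integral is visibly finite, because $A'$ is polynomial for $\sigma>s_0$, and it tends to zero.

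\textbf{Step 3: pass to the limit.} Monotone convergence gives $\int_{M_s}\e^{-\varepsilon H}\dd V_{L^2}\to V_H(\varepsilon)$, which is finite by \eqref{eq:weighted-main}. The fixed-point sum does not depend on $s$, so letting $s\to\infty$ proves the formula for every $\varepsilon>0$.

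The closing caveat in the statement is automatic from this derivation. The identity is proved as an equality of functions of $\varepsilon>0$ for the total sum, so individual summands may have high-order poles that cancel. Their Laurent expansions are only combined after summation.

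\textbf{Main obstacle.} The delicate point is Step 2: making the boundary term rigorous and correctly signed on an orbifold quotient with finite stabilizers. I would sidestep most of this by not computing $\mathcal R_s$ explicitly. Instead, I would differentiate the localization identity in $s$. The fixed-point side is constant, while the left side has derivative $\e^{-\varepsilon s}A'(s)$. This forces $\mathcal R_s(\varepsilon)=c(\varepsilon)-\int_{s_0}^{s}\e^{-\varepsilon\sigma}A'(\sigma)\,\dd\sigma$, and what remains is to show that $\mathcal R_s\to0$. That decay follows from the polynomial-times-$\e^{-\varepsilon s}$ structure established in Step 2.
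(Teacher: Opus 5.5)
Your proof is correct. It reaches the same remainder term as the paper, but by a different mechanism.

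The paper compactifies the truncated space by Lerman's symplectic cut $\overline{\M}_r$. There the level set collapses to a fixed divisor $Z_r$ with normal weight $-1$, so the orbifold ABBV theorem produces the remainder as an ordinary fixed-point contribution. The paper then needs a separate measure-zero remark about the cut divisor to identify the left-hand side with $\int_{\{H<r\}}\e^{-\varepsilon H}\,\dd V_{L^2}$.

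You instead stay on the smooth manifold with boundary $\{H\le s\}$. You obtain the remainder from Stokes' theorem applied to the transgression form $\beta\wedge(d_{S^1}\beta)^{-1}\wedge\e^{\omega_I-\varepsilon H}$, followed by fibre integration over $P_s\to Z$. The two remainders coincide exactly: with the outer-boundary orientation, $-2\pi\e^{-\varepsilon s}\int_Z\e^{\omega_s}/(\varepsilon-2\pi e)$ is the paper's term $\e^{-\varepsilon r}\int_{Z_r}\e^{\omega_r}/(c_1(N_r)-\varepsilon/2\pi)$. You can confirm the sign on $\mathbb C$ with weight one, where both give $-2\pi\e^{-\varepsilon s}/\varepsilon$.

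The decay argument via \eqref{eq:DH-linear} and $e^k=0$ is then the same. Any polynomial bound in $s$ suffices; the true degree is at most $k-1$.

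\begin{itemize}
\item Your route avoids orbifold localization on a cut space; orbifold fibre integration over $Z$ is all you need.
\item The cut delivers the sign of the remainder automatically from the normal weight, whereas you must track the orientation of $P_s$ by hand.
\end{itemize}

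Two side remarks in your write-up should not be relied on.

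\begin{itemize}
\item The ``alternative check'' presupposes the proposition. It also has the wrong sign: one has $\mathcal R_s(\varepsilon)=-\int_s^\infty\e^{-\varepsilon\sigma}A'(\sigma)\,\dd\sigma$.
\item Differentiating in $s$ only gives $\mathcal R_s=\mathcal R_{s_0}+\int_{s_0}^s\e^{-\varepsilon\sigma}A'(\sigma)\,\dd\sigma$. This shows that $\lim_{s\to\infty}\mathcal R_s$ exists, but not that it vanishes.
\end{itemize}

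So the explicit reduced formula of Step~2 is not sidestepped; it is the step that carries the proof.
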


\begin{proof}
Choose a regular value $r$ above
all critical values of $H$.  The symplectic cut \cite{Lerman1995} is
$$
\overline{\M}_r
=
\left\{
(x,z)\in\M\times\mathbb C:
H(x)+\frac{|z|^2}{2}=r
\right\}\big/S^1,
$$
where the diagonal action is
$$
\e^{\ii\theta}\cdot(x,z)
=
(\e^{\ii\theta}\cdot x,\e^{\ii\theta}z).
$$
The cut contains $\{H<r\}$ as an open dense subset and has divisor
$$
Z_r=H^{-1}(r)/S^1.
$$
Since $H$ is proper, $\overline{\M}_r$ is a compact orbifold.

Set $Y_r=H^{-1}(r)$ and choose a connection $\vartheta$ on
$Y_r\to Z_r$ satisfying
$$
\vartheta(K)=1,
\qquad
\int_{S^1}\vartheta=2\pi.
$$
The normal orbifold line bundle of the cut divisor is
$$
N_r
:=
(Y_r\times\mathbb C)/{\sim},
\qquad
(y,z)\sim
(y\cdot\e^{\ii\theta},\e^{\ii\theta}z).
$$
This uses the associated line bundle convention of
\eqref{eq:euler-class-definition}.
Under the identification of the reduced spaces, $N_r$ corresponds to $L_Z$.
Its induced unitary connection has curvature $-\ii\dd\vartheta$, so the
convention $c_1(L)=[\ii F_\nabla/(2\pi)]$ gives
$$
c_1(N_r)=\left[\frac{\dd\vartheta}{2\pi}\right]=e.
$$
The residual circle acts by
$$
\e^{\ii\phi}\cdot[x,z]
=
[\e^{\ii\phi}\cdot x,z].
$$
It fixes $Z_r$, and near the divisor
$$
[\e^{\ii\phi}\cdot y,z]
=
[y,\e^{-\ii\phi}z].
$$
The residual moment map takes the constant value $r$ on $Z_r$.  The normal
weight is $-1$, and
$$
\Eul_{S^1}
\bigl(N_{Z_r/\overline{\M}_r};\varepsilon\bigr)
=
c_1(N_r)-\nu
=
c_1(N_r)-\frac{\varepsilon}{2\pi}.
$$

Apply the orbifold ABBV formula \cite{AtiyahBott1984,BerlineVergne1982} to
$\exp(\overline\omega_r-\varepsilon\overline H_r)$.  The fixed set consists
of the original fixed components $F_\alpha$ and the cut divisor.  The
contribution of each $F_\alpha$ is exactly its contribution in the original
space, because the cut does not change a neighborhood of $F_\alpha$.  The
remaining term is
$$
\mathcal R_r(\varepsilon)
  =\e^{-\varepsilon r}
    \int_{Z_r}
      \frac{\e^{\omega_r}}{c_1(N_r)-\varepsilon/(2\pi)}.
$$
Above the largest critical value, the reduced spaces are identified.  The
Duistermaat--Heckman formula \eqref{eq:DH-linear} makes $[\omega_r]$ affine in $r$.  Expanding the
denominator to the degree allowed by $\dim Z_r$ gives, for fixed
$\varepsilon>0$,
$$
\mathcal R_r(\varepsilon)
   =O_\varepsilon\bigl(\e^{-\varepsilon r}(1+r)^{2k-1}\bigr).
$$
Hence $\mathcal R_r(\varepsilon)\to0$.

The cut divisor has real codimension two and hence zero measure for the
top-degree symplectic volume form.  Therefore the top-degree integral on the
cut equals the integral over its open dense part:
$$
\int_{\overline{\M}_r}
   \e^{\overline\omega_r-\varepsilon\overline H_r}
 =\int_{\{H<r\}}\e^{-\varepsilon H}\frac{\omega_I^{2k}}{(2k)!}.
$$
Since the integrand on the right is nonnegative, the truncated weighted
volumes converge monotonically to $V_H(\varepsilon)$ as $r\to\infty$.
Taking the limit in the ABBV formula proves the asserted fixed-point
expansion.  The same argument applies to a finite quotient stack, using
orbifold integration and orbifold equivariant Euler classes.
\end{proof}

Together with \eqref{eq:weighted-coefficient}, this gives
$$
\cball
=\frac{1}{2^k k!}
\Coeff_{\varepsilon^{-k}}
\left[
\sum_\alpha\e^{-\varepsilon d_\alpha}
\int_{F_\alpha}
\frac{\e^{\omega_I|_{F_\alpha}}}
{\Eul_{S^1}(N_\alpha;\varepsilon)}
\right].
$$
We now evaluate the coefficient by the residue formula of \cite{CHS2020}.

\subsection{The odd-degree quotient and its equivariant class}

Set
$$
\Gamma_2=\operatorname{Pic}^0(X)[2],\qquad |\Gamma_2|=2^{2g},
$$
and write
$$
\pi_\Gamma:\M\longrightarrow
\M_{PGL_2}^{(1)}=[\M/\Gamma_2]
$$
for the odd-degree $PGL_2$ quotient \cite{HauselThaddeus2003Mirror}.
The action tensors a pair by a line bundle $L\in\Gamma_2$:
$$
(E,\Phi)\longmapsto(E\otimes L,\Phi\otimes1).
$$
Since $L^{\otimes2}$ is trivial, this preserves the determinant and the
trace-free condition.  Equip $L$ with its unitary flat connection.  Tensoring
the harmonic metric with a flat metric on $L$ preserves the Hitchin
equations.  In local unitary parallel frames, the endomorphism-valued
variations and their trace pairings are unchanged.  Thus the action
preserves $g_{L^2}$, $\omega_I$ and $H$, and commutes with Higgs-field
scaling.

The K\"ahler form $\omega_I$ and the Hamiltonian $H$ descend to
$\bar\omega_I$ and $\bar H$.  Define
$$
V_{PGL_2}(\varepsilon)
:=\int_{\M_{PGL_2}^{(1)}}\exp(\bar\omega_I-\varepsilon\bar H).
$$
Integration over a finite quotient stack divides the integral of the
pulled-back form by the group order.  Consequently,
\begin{equation}\label{eq:MNS-volume-quotient}
V_{PGL_2}(\varepsilon)
=2^{-2g}V_H(\varepsilon),
\end{equation}
with finite stabilizers already accounted for by orbifold integration.
Combining this equality with \eqref{eq:weighted-coefficient} and
$\cball=2^{-k}\cH$ gives
\begin{equation}\label{eq:MNS-ball-transform}
\cball
=\frac{2^{2g-k}}{k!}
\Coeff_{\varepsilon^{-k}}V_{PGL_2}(\varepsilon).
\end{equation}
This quotient is the $p=1\in\mathbb Z_2$ sector in
\cite[Section~5.1]{MNS2000}, after removal of the central $U(1)$ factor.
Our comparison concerns the finite-dimensional equivariant integral in
the normalization fixed here.

Let $\End\mathbb E$ be the endomorphism bundle associated with the universal
projective bundle over $X\times\M_{PGL_2}^{(1)}$.  For the positive generator
$\omega_X\in H^2(X;\mathbb Z)$ with $\int_X\omega_X=1$, the class
$\alpha\in H^2_{\mathbb C^*}(\M_{PGL_2}^{(1)})$ of \cite{CHS2020} is defined by
\begin{equation}\label{eq:CHS-alpha-definition}
c_2(\End\mathbb E)
=2\alpha\otimes\omega_X+\text{other K\"unneth components}.
\end{equation}
We use $\alpha$ also for its image in ordinary cohomology.  The fixed locus
of the quotient consists of the stable-bundle moduli space $F_0$ and
$$
F_i\simeq\operatorname{Sym}^{2g-2i-1}(X),
\qquad 1\leq i\leq g-1;
$$
see \cite{Hitchin1987,Thaddeus1994,HauselThaddeus2003Relations}.
The following calculation fixes the class and the equivariant parameter
needed in the residue formula.

\begin{lemma}\label{lem:alpha-kahler-normalization}
In ordinary cohomology,
\begin{equation}\label{eq:hausel-normalization-dictionary}
[\omega_I]=2\pi^2\pi_\Gamma^*\alpha.
\end{equation}
Moreover, $\bar H|_{F_0}=0$ and
\begin{equation}\label{eq:fixed-H-value}
\bar H|_{F_i}=\pi(2i-1),\qquad 1\leq i\leq g-1.
\end{equation}
With
\begin{equation}\label{eq:a-u-substitution}
a=2\pi^2,\qquad \nu=\frac{\varepsilon}{2\pi},
\end{equation}
one has $\pi_\Gamma^*(a\alpha)=[\omega_I-\varepsilon H]$ in localized
$S^1$-equivariant cohomology.
\end{lemma}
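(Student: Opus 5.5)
We will prove the three assertions of Lemma~\ref{lem:alpha-kahler-normalization} separately: first the ordinary class, then the fixed-point values of $\bar H$, and finally combine them into the equivariant statement.

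\textbf{The ordinary class.} We compare both sides on a Prym fiber and then extend by rigidity. Since $\M$ has odd determinant degree and $\Gamma_2$ acts trivially on the relevant part of cohomology, $H^2(\M;\mathbb R)$ is one-dimensional. This follows from Hausel's computation of the cohomology, or alternatively from the retraction onto the nilpotent cone together with $H^2$ of the moduli of stable bundles. It therefore suffices to compare $[\omega_I]$ and $\pi_\Gamma^*\alpha$ after restriction to a class that detects $H^2$; the natural choice is the Prym fiber $j_b:\Prym_b\to\M'$.

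Proposition~\ref{prop:prym-polarization} gives $[j_b^*\omega_I]=4\pi^2\Theta_b$. For the other side we compute $j_b^*\pi_\Gamma^*\alpha$ using the BNR description. The universal bundle restricted to $X\times\Prym_b$ is $(p_b\times\id)_*\mathcal P$, where $\mathcal P$ is a Poincaré line bundle on $S_b\times\Prym_b$. Grothendieck--Riemann--Roch then expresses $c_2(\End\mathbb E)=4c_2-c_1^2$ in terms of the Künneth $(1,1)$-part of $c_1(\mathcal P)$. That part equals $\Theta$ of $\Jac(S_b)$, pulled back to the Prym. Pushing forward along $p_b$ gives the $\omega_X$-component as $2\Theta_b$, so $\alpha|_{\Prym_b}=\Theta_b$, and hence $[\omega_I]=2\pi^2\pi_\Gamma^*\alpha\cdot\bigl(4\pi^2/2\pi^2\bigr)/2$. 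The constant has to come out exactly as $2\pi^2$, and we expect the factor of $2$ in \eqref{eq:CHS-alpha-definition} to absorb the degree-two covering. This GRR bookkeeping, especially the factor $2$ in $p_{b*}$ and the sign conventions for $c_2(\End)$, is the main obstacle. As an independent check we would restrict instead to $F_0=\N_\Lambda$. There $\omega_I$ restricts to the Atiyah--Bott form, whose class is $2\pi^2$ times Thaddeus' $\alpha$ in Newstead's normalization.

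\textbf{Fixed values of $H$.} $F_0$ consists of pairs with $\Phi=0$, so $H=0$ there. On $F_i$ the bundle splits as $E=L\oplus L^{-1}\Lambda$ with $\Phi$ strictly lower triangular, $\Phi\in H^0(L^{-2}\Lambda K)$. The harmonic metric is diagonal, and the Hitchin equation on $L$ reads $F_L+\Phi^\dagger\Phi$-type terms equal to a constant. Integrating and applying Chern--Weil gives $\|\Phi\|_{L^2}^2$ as $2\pi$ times the degree defect $2\deg L-1$. With the paper's normalization $H=\int|\Phi|^2\,\dd A$, this yields $H=\pi(2\deg L-1)$. Taking $\deg L=i$ gives \eqref{eq:fixed-H-value}. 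This matches $\dim\operatorname{Sym}^{2g-2i-1}$, since $\deg(L^{-2}\Lambda K)=2g-1-2i$.

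\textbf{The equivariant class.} The class $\omega_I-\varepsilon H$ is $d_{S^1}$-closed. By localization it is determined by its restrictions to the $F_\alpha$, each of which is an ordinary class plus a constant times $\varepsilon$. Equivariant $\alpha$ likewise restricts to $\alpha|_{F_\alpha}+w_\alpha u$, and the weights $w_\alpha$ can be read off from the circle weight on $\Phi$ in the universal $\End\mathbb E$ on $F_i$. Comparing the $u$-components via $\nu=\varepsilon/2\pi$ reduces the statement to $2\pi^2 w_i/(2\pi)=-\pi(2i-1)$, with $w_0=0$. This holds once we check that $w_i=-(2i-1)$ in the conventions of \cite{CHS2020}. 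Equality on each fixed component then gives equality in localized cohomology.
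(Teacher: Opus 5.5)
Your values of $\bar H$ on the fixed components and your equivariant step follow the paper. The paper takes the weight $-(2i-1)$ on $F_i$ from Lemma~2 of Chiarello--Hausel--Szenes rather than recomputing it. The gap is in the ordinary class, which is the only place the constant $2\pi^2$ is decided. As written, your GRR step gives the wrong constant. If the $\omega_X$-component of $c_2(\End\mathbb E)$ on $X\times\Prym_b$ were $2\Theta_b$, then $\alpha|_{\Prym_b}=\Theta_b$. Proposition~\ref{prop:prym-polarization} would then force $[\omega_I]=4\pi^2\pi_\Gamma^*\alpha$, contradicting \eqref{eq:hausel-normalization-dictionary}. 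The correction factor $(4\pi^2/2\pi^2)/2$ is not derived from anything. Your proposed remedy is also misdiagnosed. The $2$ in \eqref{eq:CHS-alpha-definition} was already used in passing from $2\Theta_b$ to $\Theta_b$. The covering degree never enters, because $p_{b*}$ sends the point class of $S_b$ to the point class of $X$.

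The Prym route does close once the bookkeeping is done. Write $c_1(\mathcal P)=d\,\sigma+\gamma$ with $\gamma^2=-2\sigma\otimes\theta$; then $\mathrm{ch}(\mathcal P)=1+d\,\sigma+\gamma-\sigma\otimes\theta$. On $S_b\times\Prym_b$ the class $\gamma$ lies in $H^1(S_b)^-\otimes H^1(\Prym_b)$, so $(p_b\times\id)_*\gamma=0$. GRR then gives $c_1(\mathbb E)^2=0$ and $c_2(\mathbb E)=\omega_X\otimes\Theta_b$; the Todd classes only shift $c_1$ by multiples of $\omega_X$. Hence $c_2(\End\mathbb E)=4c_2(\mathbb E)-c_1(\mathbb E)^2$ has $\omega_X$-component $4\Theta_b$, so $\alpha|_{\Prym_b}=2\Theta_b$. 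Then $4\pi^2\Theta_b=2\pi^2\cdot 2\Theta_b$, and your rigidity argument finishes: $b_2(\M)=1$, and restriction to $\Prym_b$ is injective because $[j_b^*\omega_I]\neq0$.

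Even repaired, this is a different route from the paper's. The paper evaluates the Atiyah--Bott universal curvature on $F_0$ by Chern--Weil, using three ingredients:
\begin{itemize}
\item the $(2,2)$-component $-2\Tr_{\End}(\operatorname{ad}a_1\wedge\operatorname{ad}a_2)$;
\item the identity $\Tr_{\End}(\operatorname{ad}U\operatorname{ad}V)=4\tr(UV)$;
\item the prefactor $1/(8\pi^2)$.
\end{itemize}
That is precisely the computation your ``independent check'' only asserts. The paper's choice also buys something concrete. It keeps Section~\ref{sec:localization} independent of the period computation in Proposition~\ref{prop:prym-polarization}, which rests on the metric comparison. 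If you import $[j_b^*\omega_I]=4\pi^2\Theta_b$ into this lemma, the localization no longer independently tests that period normalization of $\cH$.
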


\begin{proof}
We first compute the ordinary class.  Since
$c_1(\End\mathbb E)=0$, the unitary Chern--Weil convention used here
gives
$$
c_2(\End\mathbb E)
=
\frac{1}{8\pi^2}
\Tr_{\End}(\mathbb F\wedge\mathbb F).
$$
On the component defined by $\Phi=0$, decompose the universal curvature
into its components along $X$
and along the moduli space.  If $a_1,a_2$ are two fixed-determinant
connection variations, then the value of the $(2,2)$ Künneth component
of $\mathbb F\wedge\mathbb F$ on $(a_1,a_2)$ is
$$
-2\Tr_{\End}(\operatorname{ad}a_1\wedge\operatorname{ad}a_2).
$$
The sign comes from interchanging the two moduli one-forms, and the
factor $2$ comes from the two mixed terms.  For $\mathfrak{sl}_2$, the
adjoint and fundamental traces are related by
$$
\Tr_{\End}(\operatorname{ad}U\,\operatorname{ad}V)
=4\tr(UV).
$$
The slant product with $[X]$ therefore gives
$$
\begin{aligned}
\left(c_2(\End\mathbb E)/[X]\right)(a_1,a_2)
&=
-\frac{1}{4\pi^2}
\int_X
\Tr_{\End}(\operatorname{ad}a_1\wedge\operatorname{ad}a_2)
\\
&=
-\frac1{\pi^2}\int_X\tr(a_1\wedge a_2)
=
\frac1{\pi^2}\omega_I(a_1,a_2).
\end{aligned}
$$
Equation~\eqref{eq:CHS-alpha-definition} states that
$c_2(\End\mathbb E)/[X]=2\alpha$.  Hence
$$
2\pi_\Gamma^*\alpha=\frac1{\pi^2}[\omega_I].
$$
The degree-two ordinary cohomology of the odd $PGL_2$ component is generated
by the universal class $\alpha$, whose restriction to $F_0$ is nonzero; see \cite[Section~1.2]{CHS2020}.  Thus the
scalar relating $\pi_\Gamma^*\alpha$ and $[\omega_I]$ is determined by the
preceding universal-connection calculation on the component defined by $\Phi=0$.
This proves the ordinary cohomology identity in
\eqref{eq:hausel-normalization-dictionary}.  This is the rank-two case of the
Atiyah--Bott universal-connection calculation; see
\cite{AtiyahBott1983,HauselThaddeus2004Generators,CHS2020}.

We next compute $H$ on the inverse image of $F_i$ in $\M$. There,
$$
E=L\oplus M,
\qquad
M=\Lambda L^{-1},
\qquad
\deg L=i,
\qquad
\deg M=1-i,
$$
and the Higgs field may be written as
$$
\Phi=
\begin{pmatrix}
0&0\\
\varphi&0
\end{pmatrix},
\qquad
\varphi:L\longrightarrow MK_X.
$$
With respect to the harmonic metric,
$$
[\Phi\wedge\Phi^\dagger]
=
\begin{pmatrix}
-\varphi\wedge\varphi^\dagger&0\\
0&\varphi\wedge\varphi^\dagger
\end{pmatrix}.
$$
The two diagonal Hitchin equations have the same central curvature term.
Their difference is
$$
F_L-F_M-2\varphi\wedge\varphi^\dagger=0.
$$
Our $L^2$ convention gives
$$
\|\Phi\|_{L^2}^2
=2\ii\int_X\varphi\wedge\varphi^\dagger.
$$
Consequently,
$$
\begin{aligned}
\|\Phi\|_{L^2}^2
&=\ii\int_X(F_L-F_M)
\\
&=2\pi\bigl(\deg L-\deg M\bigr)
\\
&=2\pi(2i-1),
\end{aligned}
$$
where the second equality uses
$\deg L=(\ii/2\pi)\int_X F_L$.  It follows that
$$
H=\frac12\|\Phi\|_{L^2}^2=\pi(2i-1).
$$
Since $H=\pi_\Gamma^*\bar H$, this proves
\eqref{eq:fixed-H-value} and agrees with the
Morse-theoretic formula for the Hitchin function on the rank-two fixed
components; see \cite{Hitchin1987,Thaddeus1994}.

By \cite[Lemma~2 in the proof of Theorem~3.3]{CHS2020}, the coefficient
of $\nu$ in $\alpha|_{F_i}$ is $-(2i-1)$, and this coefficient vanishes on
$F_0$.  The ordinary part is already determined by
\eqref{eq:hausel-normalization-dictionary}.  In degree two, these are the
only two components because the circle acts trivially on each $F_i$.  Under
\eqref{eq:a-u-substitution}, the equivariant part becomes
$$
-a(2i-1)\nu
=-\varepsilon\pi(2i-1)
=-\varepsilon\bar H|_{F_i}.
$$
The restrictions of $\pi_\Gamma^*(a\alpha)$ and
$[\omega_I-\varepsilon H]$ therefore agree on every fixed component.
Injectivity of fixed-point restriction after inverting the equivariant
parameter proves the localized identity.
\end{proof}

\subsection{The residue formula and its leading term}

Write
$$
I_g(a,\nu):=\oint_{\M_{PGL_2}^{(1)}}\exp(a\alpha),
$$
where $\oint$ is the equivariant integral of \cite{CHS2020}.  The preceding
lemma and Proposition~\ref{prop:noncompact-localization} identify it with
the convergent weighted volume:
$$
V_{PGL_2}(\varepsilon)
=I_g\left(2\pi^2,\frac{\varepsilon}{2\pi}\right).
$$
Specializing \cite[Theorem~3.3 and equation~(3.14)]{CHS2020} to
$\exp(a\alpha)$ gives
\begin{equation}\label{eq:CHS-residue-sum}
I_g(a,\nu)
=\sum_{r\in\{0,\nu,-\nu\}}\Res_{y=r}\Omega_{a,\nu}(y),
\end{equation}
where
\begin{equation}\label{eq:CHS-Omega}
\Omega_{a,\nu}(y)
=\frac14
\frac{\bigl[-a(\nu^2-y^2)-2\nu\bigr]^g\,\dd y}
{\begin{aligned}
&\nu^{g-1}\sinh(-ay)
 \bigl(\nu+y\tanh(ay/2)\bigr)\\
&\qquad\times\bigl(\nu+y\coth(ay/2)\bigr)
 y^{2g-2}(\nu^2-y^2)^{2g-2}
\end{aligned}}.
\end{equation}
The leading term is most readily computed from the two additional poles
that approach the origin.

\begin{proposition}\label{prop:equivariant-residue-leading-term}
For fixed $a>0$, as $\nu\to0^+$,
\begin{equation}\label{eq:Ig-leading-pole}
I_g(a,\nu)=2^{-g}a^k\nu^{-k}+O(\nu^{-k+1}),
\qquad k=3g-3.
\end{equation}
\end{proposition}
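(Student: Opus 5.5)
The plan is to trade the three residues in \eqref{eq:CHS-residue-sum} for the residues at the two extra poles of $\Omega_{a,\nu}$ that approach the origin as $\nu\to0^+$. Those two residues carry the pole of order $k$, and everything else is of lower order. First I locate the poles near $0$. The factor $\nu+y\coth(ay/2)$ tends to $2/a$ as $y\to0$, so it has no zeros near the origin. The factor $\nu+y\tanh(ay/2)=\nu+\tfrac a2y^2+O(y^4)$ has exactly two simple zeros
$$
y_\pm=\pm\ii\sqrt{2\nu/a}\,\bigl(1+O(\nu)\bigr)
$$
in a fixed disc $|y|<\delta$. Here $\delta>0$ is chosen independent of $\nu$ and smaller than the distance to the nonzero zeros of $\sinh(ay)$, $\tanh(ay/2)$ and $\coth(ay/2)$. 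For $\nu$ small, the poles of $\Omega_{a,\nu}$ inside $|y|<\delta$ are then exactly $0,\pm\nu,y_+,y_-$.

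Next I apply the residue theorem on the circle $|y|=\delta$:
$$
I_g(a,\nu)=\frac1{2\pi\ii}\oint_{|y|=\delta}\Omega_{a,\nu}-\Res_{y=y_+}\Omega_{a,\nu}-\Res_{y=y_-}\Omega_{a,\nu}.
$$
On the fixed circle, the integrand times $\nu^{g-1}$ converges uniformly as $\nu\to0$ to a bounded function: the numerator tends to $(ay^2)^g$, and every denominator factor other than $\nu^{g-1}$ stays bounded away from zero. Hence the contour term is $O(\nu^{-(g-1)})$. Since $g\geq2$, we have $g-1\leq k-1$, so this term is absorbed in $O(\nu^{-k+1})$.

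It remains to expand the two residues. At a simple zero $y_\pm$, the derivative of $\nu+y\tanh(ay/2)$ is $ay_\pm\bigl(1+O(\nu)\bigr)$. I substitute $y_\pm^2=-2\nu/a+O(\nu^2)$ into each factor:
\begin{itemize}
\item the numerator becomes $[-a\nu^2+ay^2-2\nu]^g=(-4\nu)^g(1+O(\nu))$;
\item $\sinh(-ay_\pm)\cdot ay_\pm=-a^2y_\pm^2\bigl(1+O(\nu)\bigr)=2a\nu\bigl(1+O(\nu)\bigr)$;
\item $\nu+y\coth(ay/2)=2/a+O(\nu)$;
\item $y^{2g-2}(\nu^2-y^2)^{2g-2}=y_\pm^{6g-6}\bigl(1+O(\nu)\bigr)=(-2\nu/a)^k\bigl(1+O(\nu)\bigr)$.
\end{itemize}
Collecting these, each residue equals
$$
\tfrac1{16}(-4)^g(-a/2)^k\nu^{-k}\bigl(1+O(\nu)\bigr),
$$
and the two residues agree to leading order. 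Since $g+k=4g-3$ is odd, $-(\Res_{y_+}+\Res_{y_-})=\tfrac18\,4^g\,2^{-k}a^k\nu^{-k}=2^{-g}a^k\nu^{-k}$, up to $O(\nu^{-k+1})$. This is \eqref{eq:Ig-leading-pole}.

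I expect the main obstacle to be the bookkeeping of the relative errors. The relative corrections $O(\nu)$ must not be multiplied by negative powers of $\nu$ beyond $\nu^{-k}$, which requires tracking $y_\pm=\pm\ii\sqrt{2\nu/a}\,(1+O(\nu))$ through the factor $y^{6g-6}$. One also has to check that the square roots produce no half-integer powers of $\nu$. They do not: the two residues are exchanged by $y\mapsto-y$, and only even powers of $y_\pm$ appear after using the derivative at the pole. If needed, I would make this rigorous by rescaling $y=\sqrt\nu\,w$ and expanding the integrand in $\nu$ with $w$ on a fixed contour surrounding $\pm\ii\sqrt{2/a}$.
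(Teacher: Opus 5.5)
Your proposal is correct and follows the paper's proof essentially step by step. In both, the residue theorem on a fixed small circle trades the residues at $0,\pm\nu$ for those at the two small zeros $\pm\ii\sqrt{2\nu/a}\,(1+O(\nu))$ of $\nu+y\tanh(ay/2)$. The contour term is $O(\nu^{1-g})\subset O(\nu^{-k+1})$, and the simple-pole expansion gives $-2^{-g-1}a^k\nu^{-k}(1+O(\nu))$ at each root.

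The differences are only cosmetic. You construct the circle directly (any radius below $\pi/a$ works) instead of citing Chiarello--Hausel--Szenes. You also group $\sinh(-ay)$ with the derivative of $\nu+y\tanh(ay/2)$, and $y^{2g-2}$ with $(\nu^2-y^2)^{2g-2}$, before substituting.
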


\begin{proof}
Set
$$
f_\nu(y):=\nu+y\tanh(ay/2),
\qquad
g_\nu(y):=\nu+y\coth(ay/2).
$$
Since $f_\nu$ is even, write $z=y^2$.  Taylor expansion gives
$$
f_\nu(y)
=\nu+\frac a2z+O(z^2).
$$
The implicit function theorem applied to $f_\nu=0$ at
$(z,\nu)=(0,0)$ gives a unique small solution
$$
z(\nu)=-\frac{2\nu}{a}+O(\nu^2).
$$
Its two square roots give simple zeros $b_\pm(\nu)$ satisfying
\begin{equation}\label{eq:bethe-small-roots}
b_\pm^2
=-\frac{2\nu}{a}+O(\nu^2),
\qquad
b_-(\nu)=-b_+(\nu).
\end{equation}
Their simplicity also follows from
$$
f_\nu'(b)
=
\tanh(ab/2)
+\frac{ab}{2}\operatorname{sech}^2(ab/2)
=ab\,(1+O(\nu)),
$$
and $b\neq0$ for $\nu\neq0$.

By \cite[Proposition~3.5]{CHS2020}, there is a small circle
$|y|=\rho$, independent of $\nu$, that avoids the zeros of
$\sinh(-ay)$, $f_\nu(y)$, and $g_\nu(y)$.  For sufficiently small positive
$\nu$, the only additional poles of the meromorphic one-form
$\Omega_{a,\nu}$ inside this circle, besides $0,\pm\nu$, are $b_\pm$.  Applying the residue theorem to all poles
inside the circle and identifying the residues at $0,\pm\nu$ with
$I_g(a,\nu)$ through \eqref{eq:CHS-residue-sum} gives
\begin{equation}\label{eq:residue-decomposition-small}
I_g(a,\nu)
=
\frac1{2\pi\ii}\int_{|y|=\rho}\Omega_{a,\nu}(y)
-
\sum_{\sigma\in\{+,-\}}
\Res_{y=b_\sigma}\Omega_{a,\nu}(y).
\end{equation}
On the fixed circle, the functions $y$, $\sinh(-ay)$, $f_\nu(y)$,
$g_\nu(y)$, and $\nu^2-y^2$ are uniformly bounded away from zero, and
the numerator is uniformly bounded.  The only explicit singular factor
is $\nu^{-(g-1)}$.  Hence
\begin{equation}\label{eq:outer-circle-order}
\frac1{2\pi\ii}\int_{|y|=\rho}\Omega_{a,\nu}(y)
=O(\nu^{1-g}).
\end{equation}
Since $k=3g-3$ and $g\geq2$, one has
$1-g\geq-k+1$, so this term is $O(\nu^{-k+1})$.

Fix $b=b_\pm$.  Since $f_\nu$ is the only factor vanishing at $b$, the
simple-pole formula gives
\begin{equation}\label{eq:small-pole-residue-exact}
\Res_{y=b}\Omega_{a,\nu}
=
\frac14
\frac{[-a(\nu^2-b^2)-2\nu]^g}
{\nu^{g-1}\sinh(-ab)\,g_\nu(b)\,
 b^{2g-2}(\nu^2-b^2)^{2g-2}f_\nu'(b)}.
\end{equation}
Equation~\eqref{eq:bethe-small-roots} gives the termwise expansions
$$
\begin{aligned}
-a(\nu^2-b^2)-2\nu
&=-4\nu+O(\nu^2),\\
\sinh(-ab)
&=-ab\,(1+O(\nu)),\\
g_\nu(b)=\nu+b\coth(ab/2)
&=\frac2a+O(\nu),\\
b^{2g-2}
&=\left(-\frac{2\nu}{a}\right)^{g-1}(1+O(\nu)),\\
(\nu^2-b^2)^{2g-2}
&=\left(\frac{2\nu}{a}\right)^{2g-2}(1+O(\nu)),\\
f_\nu'(b)
&=ab\,(1+O(\nu)).
\end{aligned}
$$
Substituting these expansions into
\eqref{eq:small-pole-residue-exact} gives
$$
\frac{\frac14(-4\nu)^g}
{4(-1)^{g-1}2^{3g-3}a^{-(3g-3)}\nu^{4g-3}}
=-2^{-g-1}a^k\nu^{-k}
$$
for the leading term. Consequently,
\begin{equation}\label{eq:small-pole-residue-leading}
\Res_{y=b}\Omega_{a,\nu}
=
-2^{-g-1}a^{3g-3}\nu^{-3g+3}(1+O(\nu))
=
-2^{-g-1}a^k\nu^{-k}(1+O(\nu)).
\end{equation}
The two roots have the same leading residue.  Substituting
\eqref{eq:small-pole-residue-leading} and
\eqref{eq:outer-circle-order} into
\eqref{eq:residue-decomposition-small} yields
$$
I_g(a,\nu)
=
2^{-g}a^k\nu^{-k}+O(\nu^{-k+1}),
$$
which is \eqref{eq:Ig-leading-pole}.
\end{proof}

Substituting \eqref{eq:a-u-substitution} into \eqref{eq:Ig-leading-pole}
gives
$$
V_{PGL_2}(\varepsilon)
=2^{5g-6}\pi^{9g-9}\varepsilon^{-(3g-3)}
+O(\varepsilon^{-(3g-4)}),
$$
since $2^{-g}(2\pi^2)^k(2\pi)^k=2^{5g-6}\pi^{9g-9}$.
By \eqref{eq:MNS-volume-quotient} and \eqref{eq:MNS-ball-transform}, this
independently recovers the coefficients of
Corollary~\ref{cor:explicit-coefficients}.

The additional poles in \eqref{eq:CHS-Omega} are the Bethe roots appearing
in \cite{MNS2000}; applying the residue theorem recovers the corresponding
infinite sum \cite[Remark~3.6]{CHS2020}.  The formulas above concern the
odd $PGL_2$ sector.  Allowing arbitrary determinant and trace adds, up to a
finite quotient, a $T^*\Jac(X)$ factor and its abelian contribution to the
weighted volume.

\end{document}